\definecolor{labelkey}{rgb}{0.6,0,1}
\newcounter{corr}
\definecolor{violet}{rgb}{0.580,0.,0.827}
\newcommand{\corr}[3]{\typeout{Warning : a correction remains in page
\thepage}
				\stepcounter{corr}        
				{\color{blue}\ifmmode\text{\,\sout{\ensuremath{#1}}\,}\else\sout{#1}\fi}
       {\color{red}#2}
       {\color{violet} #3}}
\def\ctel#1{\ensuremath{\Cl[ctrcst]{#1}}}
\def\cter#1{\ensuremath{\Cr{#1}}}
\def\terml#1{\ensuremath{\Cl[ctrterm]{#1}}}
\def\termr#1{\ensuremath{\Cr{#1}}}
\newcommand{\mathbi}[1]{{\boldsymbol #1}}
\newtheorem{theorem}{Theorem}[section]
\newtheorem{lemma}[theorem]{Lemma}
\newtheorem{corollary}[theorem]{Corollary}
\newtheorem{remark}[theorem]{Remark}
\newtheorem{definition}[theorem]{Definition}
\newcommand{\gradD}{{\nabla_{\!\disc}}}
\newcommand{\gradDt}{{\nabla^\theta_{\!\disc}}}
\newcommand{\PiD}{{\Pi_{\!\disc}}}
\newcommand{\PiDt}{{\Pi^\theta_{\!\disc}}}
\newcommand{\ba}{\begin{array}{llll}   }
\newcommand{\bac}{\begin{array}{c}}
\newcommand{\bari}{\begin{array}{r}}
\newcommand{\ea}{\end{array}}
\newcommand{\ban}{\begin{array}{llll}}
\newcommand{\ean}{\end{array}}
\newcommand{\be}{\begin{equation}}
\newcommand{\ee}{\end{equation}}
\newcommand{\beqsys }{\beqtab \left \{ \begin{array}{l}}
\newcommand{\eeqsys }{\end{array} \right . \eeqtab }
\newcommand{\benum}{\begin{enumerate}}
\newcommand{\eenum}{\end{enumerate}}
\newcommand{\beqtab}{\begin{eqnarray}} 
\newcommand{\eeqtab}{\end{eqnarray}}
\newcommand{\dsp}{\displaystyle}
\newcommand{\bfn}{\mathbi{n}}
\newcommand{\bvarphi}{\mathbi{\varphi}}
\newcommand{\centers}{{\cal P}}
\newcommand{\cv}{K}
\renewcommand{\d}{{\rm d}}
\newcommand{\disc}{{\cal D}}
\newcommand{\dr}{\partial}
\newcommand{\dt}{{\delta\!t}}
\renewcommand{\div}{{\rm div}}
\newcommand{\edge}{\sigma}
\newcommand{\edges}{{\mathcal F}}              % ensemble des aretes
\newcommand{\edgescv}{{{\edges}_\cv}}  
\newcommand{\grad}{\nabla}
\newcommand{\half}{{\frac 1 2}}
\newcommand{\interp}{{\mathcal I}}
\newcommand{\mesh}{{\mathcal M}}
\newcommand{\vertex}{\mathsf{v}}
\newcommand{\N}{\mathbb N}
\newcommand{\norm}[2]{\| #1 \|_{#2}}
\renewcommand{\O}{\Omega}
\renewcommand{\phi}{\varphi}
\newcommand{\R}{\mathbb R}
\newcommand{\velocity}{\vec{\mathbi{v}}}
\newcommand{\vertices}{\mathcal{V}}
\newcommand{\x}{\mathbi{x}}
\newcommand{\X}{\mathbi{X}}
\newcommand{\y}{\mathbi{y}}
\newcommand{\z}{\mathbi{z}}
\def\Cweak{C_{\rm w}}
\title{The gradient discretisation method for linear advection problems}
\author{J. Droniou, R. Eymard, T. Gallou\"et and R. Herbin} 
\date{\today}
\begin{document}
\maketitle

\begin{abstract} 
	We adapt the Gradient Discretisation Method (GDM), originally designed for elliptic and parabolic partial differential equations, to the case of a linear scalar hyperbolic equations. 
	This enables the simultaneous design and convergence analysis of various numerical schemes, corresponding to the methods known to be GDMs, such as finite elements (conforming or non-conforming, standard or mass-lumped), finite volumes on rectangular or simplicial grids, and other recent methods developed for general polytopal meshes. 
	The scheme is of centred type, with added linear or non-linear numerical diffusion. 
	We complement the convergence analysis with numerical tests based on the mass-lumped $\mathbb{P}_1$ conforming and non conforming finite element and on the hybrid finite volume method.
\end{abstract}
\begin{flushleft}
\textbf{Keywords}: linear scalar hyperbolic equation, Gradient Discretisation Method, convergence analysis, numerical tests.\\
\textbf{AMS subject classification:} 65N12, 65N30
\end{flushleft}
\section{Introduction}
We are interested here in designing and analysing an approximation of $\bar u$, solution to the linear advection problem stated in its strong form as
\begin{subequations}
	\begin{align}
		&\partial_t \bar u+ \div (\bar u\velocity)  + \bar u q^P =  f q^I, \hbox{ in }\Omega\times (0,T), \label{eq:pbcont}
		\\
		&\bar u(\x,0) = u_{\rm ini}(\x), \hbox{ for a.e.\ }x\in\Omega, \label{stefini}
	\end{align}
	\label{pbfort}
\end{subequations}
with the following assumptions on the data:
\begin{subequations}
	\begin{align}
		& \O \mbox{ is an open bounded connected polyhedral subset of }\R^d,\ d\in\N^\star\mbox{ and }T>0, \label{hypomega} \\
		& u_{\rm ini}\in L^2(\O)\mbox{ and }f \in L^2(\O\times(0,T)), \label{hypuinif} \\
		& q^I, q^P\in L^\infty(\O\times(0,T))\mbox{ with }q^I \ge 0\mbox{  and }q^P \ge 0\mbox{ a.e.\ in }\O\times (0,T),\label{hypsource}\\
		& \velocity\in W^{1,\infty}(\O\times (0,T))^d \mbox{ satisfies }  \velocity\cdot\bm{n}=0 \hbox{ on } \dr \Omega\times(0,T)\mbox{ and }\div \velocity = q^I - q^P\mbox{ a.e.\ in }\O\times (0,T),\label{hypvelo}
\end{align}
\label{hypgen}
\end{subequations}
where $\bm{n}$ is the outer normal to $\partial\Omega$. 
Since the normal boundary value of $\velocity$ vanishes, there is no need for a boundary condition on \eqref{eq:pbcont}.

The model \eqref{pbfort} typically arises in oil recovery from underground reservoirs \cite{aziz-settari,ewing-reservoir} or in underground water resources management \cite{hydro}, in which case $q^I$ and $q^P$ may represent the injection and production wells and $\bar u$ is the concentration of injected solvent or pollutant.
The problem \eqref{pbfort} is often discretised by the upstream weighting finite volume scheme (see, for example, \cite[Chapters 5 and 6]{book} and references therein), which is easy to implement even on unstructured meshes since the problem is first order. 
There are also numerous papers studying Galerkin methods for this type of problems, which are based on the following weak formulation:
a function $\bar u$ is said to be a weak solution of Problem \eqref{pbfort} if:
	\be
		\begin{aligned}
			&\bar u\in L^2(\O\times(0,T))\mbox{ and, for all $\varphi\in C^\infty_c(\R^d\times[0,T))$,}\\
			&-\int_0^T\int_\O  \bar u\ \partial_t  \varphi \  \d \x \d t - \dsp \int_\O u_{\rm ini}(\x) \ \varphi(\x,0) \d \x +\int_0^T\int_\O(-\bar u \ \velocity \cdot\grad \varphi +  \bar u \ q^P\varphi)\d \x \d t   = \int_0^T\int_\O f  \ q^I\varphi\ \d \x\d t,
		\end{aligned}
		\label{convweakeq}
		\ee
where $C^\infty_c(\R^d\times[0,T))$ is the set of the restrictions of functions of $C^\infty_c(\R^d\times(-\infty,T))$ to  $\R^d\times[0,T)$. 

Let $0=t^{(0)}<t^{(1)}<\cdots<t^{(N)}=T$ be a discretisation of the time interval, and let $\dt^{(n+\half)}=t^{(n+1)}-t^{(n)}$.
We recall that, for $V\subset H^1(\O)$ a finite dimensional space and $\theta\in[0,1]$, the $\theta$-scheme takes the following form: $u^{(0)}\in V$ being a chosen interpolate of $u_{\rm ini}$, the scheme consists in finding, for all $n=0,\ldots,N-1$,

\be
\begin{aligned}
& u^{(n+1)}\in V,~ u^{(n+\theta)}=\theta u^{(n+1)}+(1-\theta) u^{(n)}\mbox{ and, for all $ v\in V$,}\\
&\int_\O  \dfrac {u^{(n+1)} - u^{(n)}} {\dt^{(n+\half)}}   v \  \d \x  +\int_\O(- u^{(n+\theta)} \ \velocity^{(n+\half)} \cdot\grad  v +  u^{(n+\theta)} \ (q^P)^{(n+\half)} v)\d \x    = \int_\O f^{(n+\half)}  \ (q^I)^{(n+\half)} v\ \d \x,
\end{aligned}
\label{convweakeqgalerkin}
\ee
with suitable time approximations of the data indexed by $(n+\half)$. This scheme is $L^2$ stable provided that $\theta\ge \half$, which is proved letting $ v = u^{(n+\theta)}$ and following the calculus formula 
\be
\int_\O u^{(n+\theta)} \velocity \cdot\nabla u^{(n+\theta)} \d \x=\int_\O \velocity \cdot\nabla \frac{(u^{(n+\theta)})^2}{2} \d \x = -\int_\O\frac{(u^{(n+\theta)})^2}{2} \div  \velocity\d \x.
\label{eq:calculchain}\ee
Weak convergence properties are then obtained for the approximate solution, which generally displays oscillations.
See  \cite{ern-guermond} for a complete study of the particular case of Finite element methods, and \cite{codina1998comparison} for a comparison of different Galerkin schemes.
A convergence result is proved in \cite{dunca2017optim} under strong regularity hypotheses on the solution and with a constant velocity field.

\medskip

This paper is focused on the case where the approximation of $u$ is no longer done in a subspace of $H^1(\O)$. In a number of situations, coupled problems including terms of different nature (e.g. diffusive, advective\ldots) must be solved in an industrial context where the discretisation method, imposed by the use of an existing code, is based on non conforming finite element, discontinuous Galerkin or hybrid methods (with face and cell unknowns), for example.

In order to handle such a situation, we use the Gradient Discretisation Method (GDM) framework, which gives a unified formulation of a large class of conforming and nonconforming methods; we refer the reader to the monograph \cite{gdmbook} for details. 
The idea of the GDM is to replace, in a weak formulation of the continuous problem, the continuous space by the vector space of the degrees of freedom of the method $X_\disc$, the functions $u$ and $v$ by their reconstruction  $\Pi_\disc u$ and $\Pi_\disc v$, and the gradient $\nabla v$ by the reconstruction of a discrete gradient $\nabla_\disc v$. {For conforming methods, $\Pi_\disc(X_\disc)$ is a subspace of $H^1(\Omega)$ and, for $v\in X_\disc$, $\nabla_\disc v=\nabla (\Pi_\disc v)$; for non-conforming finite element methods, $\Pi_\disc(X_\disc)$ is a space of piecewise polynomial functions and, for all $v\in X_\disc$, $\nabla_\disc v$ is the broken gradient of $\Pi_\disc v$. Discontinuous Galerkin methods, which are popular in the framework of hyperbolic problems, can also be embedded in the GDM; for these methods, $\Pi_\disc(X_\disc)$ is again a space of piecewise polynomial functions, the expression of $\nabla_\disc v$ takes into account both the broken gradient of $\Pi_\disc v$ and the jump terms, and no additional stabilisation term has to be introduced in the formulation of the scheme (see \cite[Chapter 11]{gdmbook}). Note that for fully discrete methods or mass-lumped versions of the previous schemes, $\Pi_\disc$ is a genuine function reconstruction (see the schemes used in Section \ref{sec:num}).}

A natural scheme would then be: given an interpolate $u^{(0)}\in X_\disc$ of $u_{\rm ini}$, solve for $n=0,\ldots,N-1$,
\be
\begin{aligned}
& u^{(n+1)}\in X_{\disc},~ u^{(n+\theta)}=\theta u^{(n+1)}+(1-\theta) u^{(n)}\mbox{ and, for all $ v\in X_{\disc}$,}\\
&\int_\O  \Pi_\disc \dfrac {u^{(n+1)} - u^{(n)}} {\dt^{(n+\half)}}  \Pi_\disc  v \  \d \x   \\ 
&\qquad+\int_\O(- \Pi_\disc u^{(n+\theta)} \ \velocity^{(n+\half)} \cdot\grad_\disc  v +  \Pi_\disc u^{(n+\theta)} \ (q^P)^{(n+\half)}\Pi_\disc  v)\d \x    = \int_\O f^{(n+\half)}  \ (q^I)^{(n+\half)}\Pi_\disc  v\ \d \x.
\end{aligned}
\label{convweakeqgdm}
\ee
Unfortunately, it does not seem possible to establish the stability (and thus the convergence) of \eqref{convweakeqgdm} due to the absence of the equivalent of the calculus chain \eqref{eq:calculchain}  in this fully discrete setting involving function and gradient reconstructions $\Pi_\disc$ and $\nabla_\disc$ instead of the classical differential operators.
To obtain a scheme amenable to a convergence analysis, we thus consider an alternative formulation, using a skew-symmetric reformulation of the advective term.

\medskip

If $\grad \bar u \in L^2(\O\times(0,T))$, owing to the relation
\[
\frac12\div\velocity + q^P=\frac12\left(q^I-q^P\right)+ q^P=\frac 1 2 (q^I+q^P),
\]
a function $\bar u \in L^2(\O\times(0,T))$ is a solution to \eqref{convweakeq} if and only if it satisfies
\be\begin{aligned}
&\forall v\in C^\infty_c(\R^d\times[0,T)),\\
& -\int_0^T\int_\O  \bar u \ \partial_t   v \  \d \x \d t - \dsp \int_\O u_{\rm ini}(\x)\  v(\x,0) \ \d \x \\ 
&\qquad +\int_0^T\int_\O\left(\half\grad\bar u\cdot \velocity v-\half\bar u\velocity\cdot\grad  v + \half\bar u(q^I + q^P)  v\right)\d \x \d t  = \int_0^T\int_\O f  q^I v\d \x\d t.
\end{aligned}
\label{convweakeqreg}\ee

The idea to discretise \eqref{eq:pbcont} is then to mimick the formulation \eqref{convweakeqreg} instead of \eqref{convweakeq} in the discrete setting  (this idea is in the same line as the weak formulation chosen in \cite[Hypothesis (A1)]{bur2010exp}). 
Indeed, similarly to the standard skew-symmetric formulation of the convective term in the Navier-Stokes equations, the advection component in \eqref{convweakeqreg} vanishes when the solution is taken as a test function. 
The GDM scheme based on \eqref{convweakeqreg} is thus: take $u^{(0)}\in X_\disc$ and interpolant of $u_{\rm ini}$ and, for all $n=0,\ldots,N-1$,
\be
\begin{aligned}
& u^{(n+1)}\in X_{\disc},~ u^{(n+\theta)}=\theta u^{(n+1)}+(1-\theta) u^{(n)}\mbox{ and, for all $ v\in X_{\disc}$,}\\
&\int_\O  \Pi_\disc \dfrac {u^{(n+1)} - u^{(n)}} {\dt^{(n+\half)}}  \Pi_\disc  v \  \d \x  +\int_\O\Big( \half \grad_\disc u^{(n+\theta)} \cdot \velocity^{(n+\half)} \Pi_\disc  v -\half \Pi_\disc u^{(n+\theta)} \velocity^{(n+\half)}\cdot\grad_\disc  v 
\\ &\qquad\qquad\qquad+\half \Pi_\disc u^{(n+\theta)} \left[(q^I)^{(n+\half)}+ (q^P)^{(n+\half)}\right] \Pi_\disc  v\Big)\d \x    = \int_\O f^{(n+\half)}  \ (q^I)^{(n+\half)}\Pi_\disc  v\ \d \x.
\end{aligned}
\label{convweakeqgdmskew}
\ee
Letting $ v = u^{(n+\theta)}$ in \eqref{convweakeqgdmskew} leads to an estimate on $ \Pi_\disc u^{(n+\theta)}$, which entails a weak convergence property for the reconstruction of the function. 
However a new difficulty arises: the scheme \eqref{convweakeqgdmskew} does not yield any estimate on $\grad_\disc u^{(n+\theta)}$; this prevents us from obtaining any limit (even weak) for this term, and thus from passing to the limit to recover the continuous problem.

This issue is solved by introducing a stabilisation term that yields a weak bound on $\grad_\disc u^{(n+\theta)}$. 
Several versions of such a stabilisation term can be found \cite{john2018fin,franca1992stab}, such as the symmetric linear stabilisation of \cite{bur2010exp}, or the Streamline-Upwind/Petrov-Galerkin (SUPG) stabilisation \cite{bochev2004stab,knobloch2008defin,hugues1987fin,carmo2003stab}. 
The latter is equivalent to replacing, in the term $\bar u\velocity$ of \eqref{eq:pbcont}, $\bar u$ by $\bar u - h \frac {\velocity}{|\velocity|}\cdot\nabla\bar u$ (this is a kind of continuous upstream weighting for a mesh with size $ h$). This leads to the term
\[
 \div \left(\left[\bar u -  h \frac {\velocity}{|\velocity|}\cdot\nabla\bar u\right]\velocity\right) = \div (\bar u \velocity-  h\Lambda \nabla\bar u),\hbox{ with }\Lambda(\x,t)  =  \frac {\velocity(\x,t)}{|\velocity(\x,t)|}\otimes \velocity(\x,t).
\]
It is then numerically more stable to complete the SUPG scheme by modifying $\Lambda$ into
\[
 \Lambda(\x,t)  =  \frac {\velocity(\x,t)}{|\velocity(\x,t)|}\otimes \velocity(\x,t) + \mu \mathrm{Id},
\]
for a small value $\mu>0$. 
This choice of stabilisation term $\div (-h\Lambda \nabla\bar u)$ can be generalised into
\begin{equation}\label{term:plap}
- h^\alpha\div(\Lambda |\nabla \bar u|_\Lambda^{p-2}\nabla \bar u)\mbox{ where }|\nabla \bar u|_\Lambda = \sqrt{\Lambda \nabla \bar u\cdot \nabla \bar u},
\end{equation}
for some $p\in (1,+\infty)$ and $\alpha>0$, and $\Lambda(\x,t)$ symmetric positive definite with uniformly  bounded eigenvalues. 
 An obvious and easy choice is $p=2$ and $\Lambda = \rm{Id}$, which leads to the classical Laplace operator. 
However, using $p \ne 2$ may lead to a smaller numerical diffusion, see Section \ref{sec:num};  let us note that in this case, the linear model \eqref{pbfort} is approximated by a non-linear problem, which is not in general much of a problem, since the complete coupled physical model usually involves other non-linear terms.
In this paper, we stabilise the scheme \eqref{convweakeqgdmskew} by introducing the discrete version of the stabilisation term \eqref{term:plap}, which leads to Scheme \eqref{eq:gradscheme}.
Since the GDM method also includes meshless schemes, the stabilisation term depends on a parameter $h_\disc$ which is an adaptation to the hyperbolic setting of the space size of gradient discretisation for elliptic problems, see Definition \ref{def:sizegd} below. 

\medskip

{In addition to providing a generic formulation that applies to a large variety of schemes, this paper presents the following original features:
\begin{enumerate}
 \item The analysis applies to mesh-based as well as meshless schemes, owing to Definition \ref{def:sizegd} of the size of gradient discretisation which gives us a way to introduce an intrinsic vanishing viscosity without referring to any mesh size. 
 \item We study and compare, for different values of $p$, the effect of the stabilisation of a hyperbolic scheme by $p$-Laplace vanishing diffusion. Numerical examples show that in some cases, values of $p$ different from 2 lead to more accurate solutions.
 \item The strong convergence of the stabilised scheme is obtained through an energy estimate, proved in the $L^2$ framework by regularisation as in \cite{MR1022305};
this energy estimate is also used for the proof of uniqueness of the solution.  
\item Convergence is established without assuming additional regularity on the solution or the velocity field, and a uniform-in-time weak convergence is proved.
\end{enumerate}
}

This paper is organised as follows. 
The continuous problem and the energy estimate are studied in Section \ref{sec:pbcont}.  
We then apply in Section \ref{sec:gdm} the gradient discretisation tools to Problem \eqref{convweakeq}, and derive some estimates which are used in Section \ref{sec:cv} to establish the convergence of the scheme; as a by-product of this convergence, we also obtain an existence result for the solution to \eqref{convweakeq}.
In Section \ref{sec:num} some numerical results are provided, using three different schemes that fit into the GDM framework.
 
\section{The continuous problem}\label{sec:pbcont}
Since the flux is null on the boundary $\partial \Omega$, the problem \eqref{convweakeq} may be reformulated on the whole space $\R^d$ by extending $\velocity$, $q^I$ and $q^P$ to $\R^d\times\R$:  we first choose an extension $\velocity\in  W^{1,\infty}(\R^d\times \R)^d$, and then set $q^I=\max(\div\velocity, 0)$ and $q^P=\max(-\div\velocity, 0)$ outside $\Omega\times (0,T)$.
We also extend $\bar u$, $f$ and $u_{\rm ini}$ by the value $0$ outside $\O\times(0,T)$ and $\O$ respectively. 
With these extensions and {assuming} \eqref{hypgen}, the problem \eqref{convweakeq} is equivalent to the following problem, posed on the whole space:
\be
	\begin{aligned}
		&\bar u\in L^2(\R^d\times(0,T))\mbox{ and, for all $\varphi\in C^\infty_c(\R^d\times[0,T))$,}\\
		&-\int_0^T\int_{\R^d}  \bar u\ \partial_t  \varphi \  \d \x \d t - \dsp \int_{\R^d} u_{\rm ini}(\x) \ \varphi(\x,0) \d \x \\ 
		&\qquad+\int_0^T\int_{\R^d}(-\bar u \ \velocity \cdot\grad \varphi +  \bar u \ q^P\varphi)\d \x \d t   = \int_0^T\int_{\R^d} f  \ q^I\varphi\ \d \x\d t.
	\end{aligned}
	\label{eq:pbcontext}
\ee

\begin{lemma}[Weak continuity with respect to time]\label{lem:wcontime}
{Assuming} \eqref{hypgen}, let $\bar u$ be a solution of \eqref{convweakeq}, or to \eqref{eq:pbcontext} after extending $\bar u$ by 0 outside of $\Omega$.
Let $\psi\in C^\infty_c(\R^d)$. Then the function $\bar U_\psi~:~ t\mapsto \int_{\R^d} \bar u(\x,t) \psi(\x)\ \d \x$ satisfies $\bar U_\psi\in H^{1}(0,T)\subset C^0([0,T])$ and $\bar U_\psi(0) = \int_{\R^d}  u_{\rm ini}(\x)\psi(\x)\ \d \x$. 
Hence, $\bar u \in \Cweak([0,T],L^2(\O))$, where $\Cweak([a,b],L^2(\O))$ stands for the space of functions $[a,b]\to L^2(\O)$ that are continuous weakly in  $L^2(\O)$. 
\end{lemma}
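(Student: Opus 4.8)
The plan is to test the space--time weak formulation \eqref{eq:pbcontext} against separated test functions $\varphi(\x,t)=\psi(\x)\,\eta(t)$, with $\psi$ the fixed function of the statement and $\eta$ a scalar time profile, so as to reduce the space--time identity to a one--dimensional identity for $\bar U_\psi$ and its time derivative. Since $\partial_t\varphi=\psi\,\eta'$, $\grad\varphi=\eta\,\grad\psi$ and $\varphi(\cdot,0)=\eta(0)\psi$, formula \eqref{eq:pbcontext} reduces to
\[
\int_0^T \eta'(t)\,\bar U_\psi(t)\,\d t = -\,\eta(0)\int_{\R^d} u_{\rm ini}\,\psi\,\d\x \;-\;\int_0^T \eta(t)\,g_\psi(t)\,\d t,
\]
where $g_\psi(t)=\int_{\R^d}\big(\bar u\,\velocity\cdot\grad\psi-\bar u\,q^P\psi+f\,q^I\psi\big)\,\d\x$.

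\textbf{Regularity of $\bar U_\psi$.} First I would restrict to $\eta\in C^\infty_c((0,T))$ (so that $\eta(0)=0$); the displayed identity then reads $\int_0^T\eta'\bar U_\psi=-\int_0^T\eta\,g_\psi$, which says exactly that $\bar U_\psi'=g_\psi$ in the sense of distributions. Next I would check that both functions lie in $L^2(0,T)$. By Cauchy--Schwarz in space, $|\bar U_\psi(t)|\le\|\psi\|_{L^2}\,\|\bar u(\cdot,t)\|_{L^2}$, whose square is integrable since $\bar u\in L^2(\R^d\times(0,T))$; each term of $g_\psi$ is bounded in the same way using the data regularity \eqref{hypuinif}--\eqref{hypvelo} ($f\in L^2$, $\velocity\in W^{1,\infty}\subset L^\infty$, $q^I,q^P\in L^\infty$, and $\psi,\grad\psi$ bounded with compact support). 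Hence $\bar U_\psi\in H^1(0,T)$, and the one--dimensional Sobolev embedding $H^1(0,T)\hookrightarrow C^0([0,T])$ provides the continuous representative.

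\textbf{Initial value.} To recover $\bar U_\psi(0)$ I would take $\eta\in C^\infty([0,T])$ with $\eta(T)=0$ and $\eta(0)=1$, which is admissible since then $\varphi=\psi\eta\in C^\infty_c(\R^d\times[0,T))$. On the one hand the reduced identity holds; on the other hand, because $\bar U_\psi\in H^1(0,T)$ I may integrate by parts, $\int_0^T\eta'\bar U_\psi=\eta(T)\bar U_\psi(T)-\eta(0)\bar U_\psi(0)-\int_0^T\eta\,\bar U_\psi'$, and substitute $\bar U_\psi'=g_\psi$ together with $\eta(T)=0$. Comparing the two expressions cancels the $g_\psi$ integrals and leaves $\bar U_\psi(0)=\int_{\R^d}u_{\rm ini}\,\psi\,\d\x$.

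\textbf{From dense continuity to $\Cweak([0,T],L^2(\O))$, and the main obstacle.} The final claim requires upgrading the pointwise-in-time continuity just obtained, valid for every $\psi$ in the dense subspace $C^\infty_c(\R^d)$ of $L^2(\O)$, to genuine weak continuity against all of $L^2(\O)$. For each $t$ one wants to define the representative $\bar u(\cdot,t)\in L^2(\O)$ through the bounded linear form $\psi\mapsto\bar U_\psi(t)$, and then pass the continuity from the dense set to arbitrary test functions by an equicontinuity argument. I expect this to be the delicate point: the construction and the equicontinuity both hinge on a uniform-in-time control $\|\bar u(\cdot,t)\|_{L^2(\O)}\le M$ for a.e.\ $t$ (equivalently $\bar u\in L^\infty(0,T;L^2(\O))$); indeed the estimate $|\bar U_\psi(t)|\le\|\psi\|_{L^2}\|\bar u(\cdot,t)\|_{L^2}$ only controls $\bar u(\cdot,t)$ in a negative-order space without such a bound, so the bare $L^2(\R^d\times(0,T))$ membership is not enough to land in $\Cweak([0,T],L^2(\O))$. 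Securing this uniform $L^2$ bound -- precisely the energy estimate for the continuous problem -- is the main obstacle; once available, the standard density-plus-uniform-boundedness criterion yields a weakly continuous representative agreeing a.e.\ with $\bar u$ and inheriting the value $\int_{\R^d}u_{\rm ini}\psi\,\d\x$ at $t=0$.
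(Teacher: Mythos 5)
Your argument coincides with the paper's proof: the authors also take separated test functions $\varphi(\x,t)=\Theta(t)\psi(\x)$ in \eqref{eq:pbcontext}, identify the distributional derivative $\bar U_\psi'$ as the $L^2(0,T)$ function given in \eqref{Upsi:der}, conclude $\bar U_\psi\in H^{1}(0,T)\subset C^0([0,T])$, and recover $\bar U_\psi(0)$ by exactly the integration by parts you describe with $\Theta(0)=1$. As for the last claim, the paper's proof also stops at the initial-value identity and treats $\bar u \in \Cweak([0,T],L^2(\O))$ as an immediate consequence, so your observation that upgrading continuity from the dense family $\psi\in C^\infty_c(\R^d)$ to all of $L^2(\O)$ requires a uniform-in-time $L^2(\O)$ bound identifies a step the authors leave implicit rather than a divergence between the two arguments.
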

\begin{proof}
	Let $\Theta\in C^\infty_c([0,T))$. 
	Taking $\varphi(\x,t)=\Theta(t)\psi(\x)$ in \eqref{eq:pbcontext} yields 
	\be
		\begin{aligned}
			&-\int_0^T\Theta'(t) \bar U_\psi(t)  \d t - \dsp \Theta(0) \int_{\R^d} u_{\rm ini}(\x) \ \psi(\x) \d \x \\ 
			&+\int_0^T\Theta(t)\int_{\R^d}(-\bar u(\x,t) \ \velocity(\x,t) \cdot\grad \psi(\x) +  \bar u(\x,t) \ q^P(\x,t)\psi(\x))\d \x \d t \\
			&\qquad= \int_0^T\Theta(t)\int_{\R^d} f(\x,t)  \ q^I(\x,t)\psi(\x)\ \d \x\d t.
		\end{aligned}
		\label{eq:wcontime}
	\ee
	Restricting to $\Theta\in C^\infty_c(0,T)$ this shows that, in the weak derivative sense,
	\begin{equation}\label{Upsi:der}
 		\bar U_\psi'(t) = \int_{\R^d} \Big((f(\x,t)  \ q^I(\x,t) -  \bar u(\x,t) \ q^P(\x,t))\psi(\x)\ + \bar u(\x,t) \ \velocity(\x,t) \cdot\grad \psi(\x) \Big)\d \x.
	\end{equation}
	Since the right hand side of the above equation belongs to $L^2(0,T)$, this concludes the proof that $\bar U_\psi\in H^{1}(0,T)\subset C^0([0,T])$.
	The relation $\bar U_\psi(0) = \int_{\R^d}  u_{\rm ini}(\x)\psi(\x)\ \d \x$ is proved taking $\Theta\in C^\infty_c([0,T))$ such that $\Theta(0)=1$ in \eqref{eq:wcontime}, integrating-by-parts in time and using \eqref{Upsi:der}. 
\end{proof}

\begin{lemma}[Energy estimate]\label{lem:enerest}
	{Assuming} \eqref{hypgen}, any solution $\bar u$ of \eqref{convweakeq} satisfies:
	\be
		\begin{aligned}
			& \half\int_0^T\int_\O  (\bar u(\x,t))^2 \d \x \d t  + \half \int_0^T(T-t)\int_\O (q^I(\x,t) + q^P(\x,t)) \ \bar u(\x,t)^2\d \x \d t  \\ \dsp
			&\qquad=  \frac T 2\int_\O u_{\rm ini}(\x)^2 \d \x + \int_0^T(T-t)\int_\O f(\x,t)   q^I(\x,t)\bar u(\x,t)\d \x\d t.
		\end{aligned}
		\label{convweakeqrenorm}
	\ee
\end{lemma}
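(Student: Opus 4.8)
The plan is to derive the renormalised (energy) identity obtained by formally multiplying the strong form of \eqref{eq:pbcontext} by $\bar u$ and integrating against the time weight $(T-t)$. Formally, writing $\div\velocity=q^I-q^P$ and using $\bar u\,\div(\bar u\velocity)=\half\div(\bar u^2\velocity)+\half \bar u^2\div\velocity$, the equation $\partial_t\bar u+\div(\bar u\velocity)+\bar u q^P=fq^I$ gives the pointwise energy balance
\[
\half\partial_t(\bar u^2)+\half\div(\bar u^2\velocity)+\half(q^I+q^P)\bar u^2=fq^I\bar u .
\]
Integrating over $\R^d$ kills the space-divergence term (the extended $\bar u$ has compact support), multiplying the resulting ODE for $E(t)=\half\int_{\R^d}\bar u^2\,\d\x$ by $(T-t)$, integrating over $(0,T)$ and integrating by parts in time then produces exactly \eqref{convweakeqrenorm}, the $t=0$ boundary term generating $\frac T2\int u_{\rm ini}^2$. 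All integrals reduce to $\O$ since $\bar u$, $u_{\rm ini}$ and $f$ vanish outside $\O$.

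The difficulty is that $\bar u\in L^2$ only, so the chain rule used above is not justified directly; I would make it rigorous by regularisation, following \cite{MR1022305}. Mollifying \eqref{eq:pbcontext} in space by $\rho_\eps$ and setting $\bar u_\eps=\bar u*\rho_\eps$, one obtains
\[
\partial_t\bar u_\eps+\div(\bar u_\eps\velocity)=(fq^I)*\rho_\eps-(\bar u q^P)*\rho_\eps-r_\eps=:g_\eps ,
\]
where $r_\eps=\div(\bar u\velocity)*\rho_\eps-\div(\bar u_\eps\velocity)$ is the commutator between convolution and the advection operator. The crucial point, and the main obstacle, is the DiPerna--Lions commutator lemma, which gives $r_\eps\to0$ in $L^2(\R^d\times(0,T))$ as $\eps\to0$; this is where the regularity $\velocity\in W^{1,\infty}$ from \eqref{hypvelo} is essential. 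Since $g_\eps\in L^2$, the equation shows $\partial_t\bar u_\eps\in L^2(0,T;L^2(\R^d))$, so $\bar u_\eps\in H^1(0,T;L^2(\R^d))$ is spatially smooth and the chain rule now applies legitimately.

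With this regularisation in hand, I would multiply the mollified equation by $\bar u_\eps$, use $\bar u_\eps\,\div(\bar u_\eps\velocity)=\half\div(\bar u_\eps^2\velocity)+\half\bar u_\eps^2\div\velocity$, integrate over $\R^d$ (the divergence term vanishing by compact support), and reach the ODE $E_\eps'(t)+\half\int_{\R^d}\bar u_\eps^2\div\velocity\,\d\x=\int_{\R^d}g_\eps\bar u_\eps\,\d\x$ for $E_\eps(t)=\half\int_{\R^d}\bar u_\eps^2\,\d\x$. Multiplying by $(T-t)$, integrating on $(0,T)$ and integrating by parts in time yields the $\eps$-regularised version of \eqref{convweakeqrenorm}. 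To pass to the limit I would use $\bar u_\eps\to\bar u$ and $g_\eps\to fq^I-\bar u q^P$ in $L^2$, the vanishing of $r_\eps$, and the identity $\div\velocity=q^I-q^P$ to recombine $\half(q^I-q^P)\bar u^2-q^P\bar u^2$ into $\half(q^I+q^P)\bar u^2$. The one remaining point is the initial term: $E_\eps(0)$ must be identified as $\half\|u_{\rm ini}*\rho_\eps\|_{L^2}^2$, which follows from the weak form tested against $\Theta(t)\psi(\x)$ with $\Theta(0)=1$, exactly as in Lemma \ref{lem:wcontime}; letting $\eps\to0$ gives $TE_\eps(0)\to\frac T2\int_\O u_{\rm ini}^2$ and closes the identity. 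Obtaining an exact equality, rather than merely an inequality, hinges entirely on $r_\eps$ vanishing in the limit.
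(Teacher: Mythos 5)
Your proposal is correct and follows essentially the same route as the paper: regularisation of $\bar u$ by spatial mollification, the DiPerna--Lions commutator argument from \cite{MR1022305} to kill the remainder term, and identification of the initial value via the weak-in-time continuity of Lemma \ref{lem:wcontime}. The only (cosmetic) difference is that the paper carries out the mollification inside the weak formulation, by taking the admissible test function $\varphi(\x,t)=(T-t)\,(\rho_n*\rho_n*\bar u)(\x,t)$, rather than mollifying the equation itself; also note that only the \emph{weak} $L^2$ convergence of the commutator to zero (boundedness in $L^2$ plus convergence against smooth test functions) is needed and is what the paper proves, so your claim of strong $L^2$ convergence, while not harmful, is stronger than required.
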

\begin{proof}
	By density of $C^\infty_c(\R^d\times[0,T))$ in $H^{1}(\R^d\times(0,T))$, we can consider functions $\varphi\in C^0_c(\R^d\times[0,T))\cap H^{1}(\R^d\times(0,T))$ in \eqref{eq:pbcontext}. 
Letting $\rho$ be a mollifier on $\R^d$, and $\rho_n(\x) = n^d\rho(n\x)$ for all $\x\in\R^d$ and $n\in\N^\star$, we choose the function $\varphi$ defined by
\[
\forall \x,t\in \R^d\times[0,T],\ \varphi(\x,t) = (T-t)\int_{\R^d} \int_{\R^d}\bar u(\z,t)\rho_n(\y - \z)\rho_n(\y - \x)\d\z\d\y,
\]
which satisfies $\varphi\in C^0_c(\R^d\times[0,T))\cap H^{1}(\R^d\times[0,T))$ owing to Lemma \ref{lem:wcontime}. 
Using an integration by parts with respect to $\y$, we notice that
\[
 \forall \x,t\in \R^d\times[0,T],\ \grad \varphi(\x,t) = (T-t)\int_{\R^d} \int_{\R^d}\bar u(\z,t)\grad\rho_n(\y - \z)\rho_n(\y - \x)\d\z\d\y,
\]
With this choice of $\varphi$ in \eqref{eq:pbcontext} leads to $\terml{t1}^{(n)} + \terml{t2}^{(n)} +\terml{t3}^{(n)} +\terml{t4}^{(n)} +\terml{t4a}^{(n)}+\terml{t5}^{(n)}  = \terml{t6}^{(n)}$, with
\be\begin{aligned}
&\termr{t1}^{(n)} = \int_0^T\int_{\R^d}  \bar u(\x,t) \int_{\R^d} \int_{\R^d}\bar u(\z,t)\rho_n(\y - \z)\rho_n(\y - \x)\d\z\d\y \d \x \d t, \\
&\termr{t2}^{(n)} = -\int_0^T(T-t)\int_{\R^d}  \bar u(\x,t) \int_{\R^d} \partial_t\Big(\int_{\R^d}\bar u(\z,t)\rho_n(\y - \z)\rho_n(\y - \x)\d\z\Big)\d\y \d \x \d t, \\
&\termr{t3}^{(n)} = -T \int_{\R^d} u_{\rm ini}(\x)\int_{\R^d} \int_{\R^d}u_{\rm ini}(\z) \rho_n(\y - \z)\rho_n(\y - \x)\d\z\d\y \d \x,  \\
&\termr{t4}^{(n)} =  -\int_0^T(T-t)\int_{\R^d}\bar u(\x,t) \ \velocity(\y,t)\cdot\int_{\R^d} \int_{\R^d}\bar u(\z,t)\grad\rho_n(\y - \z)\rho_n(\y - \x)\d\z\d\y \d \x \d t,  \\ 
&\termr{t4a}^{(n)} =  \int_0^T(T-t)\int_{\R^d}(-\bar u(\x,t) \ (\velocity(\x,t)- \velocity(\y,t))\cdot \int_{\R^d} \int_{\R^d}\bar u(\z,t)\grad\rho_n(\y - \z)\rho_n(\y - \x)\d\z\d\y \d \x \d t,  \\ 
&\termr{t5}^{(n)} =  \int_0^T(T-t)\int_{\R^d}\bar u(\x,t) \ q^P(\x,t)\int_{\R^d} \int_{\R^d}\bar u(\z,t)\rho_n(\y - \z)\rho_n(\y - \x)\d\z\d\y\d \x \d t,  \\
&\termr{t6}^{(n)} =  \int_0^T(T-t)\int_{\R^d} f(\x,t)  \ q^I(\x,t)\int_{\R^d} \int_{\R^d}\bar u(\z,t)\rho_n(\y - \z)\rho_n(\y - \x)\d\z\d\y\ \d \x\d t.
\end{aligned}
\label{eq:termn}\ee
Introducing the function $\bar u_n(\y,t) = \int_{\R^d}\bar u(\z,t)\rho_n(\y - \z)\d\z$, which converges to $\bar u$ in $L^2(\R^d\times(0,T))$ as $n\to\infty$ and satisfies $\bar u_n\in H^1(\R\times(0,T))$ and $\bar u_n(\y,0) = \int_{\R^d} u_{\rm ini}(\z)\rho_n(\y - \z)\d\z$ (see Lemma \ref{lem:wcontime}), we have
\[
 \termr{t1}^{(n)} = \int_0^T\int_{\R^d}  \bar u_n(\y,t)^2\d\y \d t
\]
and, using an integration-by-parts,
\begin{align*}
 \termr{t2}^{(n)} = -\int_0^T(T-t)\int_{\R^d}  \bar u_n(\y,t) \partial_t \bar u_n(\y,t)\d\y \d t 
={}& -\int_0^T(T-t)\int_{\R^d}   \partial_t \left(\frac 1 2 \bar u_n(\y,t)^2\right)\d\y \d t\\
={}& \frac T 2  \int_{\R^d} \bar u_n(\y,0)^2\d\y  -\int_0^T\int_{\R^d}    \frac 1 2 \bar u_n(\y,t)^2\d\y \d t.
\end{align*}
Gathering these results leads to
\[
 \termr{t1}^{(n)} + \termr{t2}^{(n)} +\termr{t3}^{(n)} =  \frac 1 2\int_0^T\int_{\R^d}    \bar u_n(\y,t)^2\d\y \d t - \frac T 2  \int_{\R^d} \bar u_n(\y,0)^2\d\y 
\]
and therefore
 \[
  \lim_{n\to\infty} (\termr{t1}^{(n)} + \termr{t2}^{(n)} +\termr{t3}^{(n)}) =\frac 1 2 \int_0^T\int_{\R^d}  \bar u(\x,t)^2 \d \x \d t - \frac T 2\int_{\R^d}  u_{\rm ini}(\x)^2 \d \x.
 \]
Turning to $\termr{t4}^{(n)}$ we write, using the divergence formula and $\div\velocity=q^I-q^P$,
\begin{align*}
 \termr{t4}^{(n)} ={}&  -\int_0^T(T-t)\int_{\R^d}\bar u_n(\y,t) \ \velocity(\y,t)\cdot\grad\bar u_n(\y,t)\d\y \d t \\
={}& -\int_0^T(T-t)\int_{\R^d} \velocity(\y,t)\cdot\grad\left(\frac 1 2\bar u_n(\y,t)^2\right)\d\y \d t\\
%={}&\frac 1 2\int_0^T(T-t)\int_{\R^d}\bar u_n(\y,t)^2\div\velocity(\y,t)\d\y \d t\\
 ={}& \frac 1 2\int_0^T(T-t)\int_{\R^d}\bar u_n(\y,t)^2(q^I(\y,t)-q^P(\y,t))\d\y \d t.
\end{align*}
Hence,
 \[
  \lim_{n\to\infty} \termr{t4}^{(n)} = \frac 1 2\int_0^T(T-t)\int_{\R^d}\bar u(\y,t)^2(q^I(\y,t)-q^P(\y,t))\d\y \d t.
 \]
We then easily see that, as $n\to\infty$,
\[
\termr{t5}^{(n)} = \int_0^T(T-t)\int_{\R^d}\bar u_n(\y,t)^2 \ q^P(\y,t)\d \y \d t
\to \int_0^T(T-t)\int_{\R^d}\bar u(\y,t)^2 \ q^P(\y,t)\d \y \d t
\] 
and
\[
  \termr{t6}^{(n)} \to \int_0^T(T-t)\int_{\R^d}\bar u(\y,t) f(\y,t) \ q^I(\y,t)\d \y \d t.
\]
The proof is completed by gathering all the above convergence results and by proving that
 \be
  \lim_{n\to\infty} \termr{t4a}^{(n)}= 0.
 \label{eq:limtermt4a}\ee
In order to do so, we follow the technique of \cite[Lemma II.1]{MR1022305} and \cite[Lemma B.4]{eym2010conv}. 
An integration-by-parts gives
\[
 \termr{t4a}^{(n)} =  \int_0^T(T-t) \int_{\R^d} a_n(\y,t)\bar u_n(\y,t) \d\y \d t,  
\]
with
\[
 a_n(\y,t) = \div\Big(\int_{\R^d}(\bar u(\x,t) \rho_n(\y - \x)\ (\velocity(\x,t)- \velocity(\y,t))\d \x\Big). 
\]
Since the function $(T-t)\bar u_n$ converges to $(T-t)\bar u$ in $L^2(\R^d\times(0,T))$ as $n\to\infty$,
the proof of \eqref{eq:limtermt4a} is complete if we can show that $a_n\to 0$ weakly in $L^2(\R^d\times(0,T))$. We have
\begin{equation}\label{an.cut}
 a_n(\y,t) = \int_{\R^d}\bar u(\x,t) \grad\rho_n(\y - \x)\cdot(\velocity(\x,t)- \velocity(\y,t))\d \x  - \int_{\R^d}\bar u(\x,t) \rho_n(\y - \x)\div \velocity(\y,t))\d \x.
\end{equation}
By Lipschitz continuity of $\velocity$, there exists $C_{\velocity}>0$ depending only on $\velocity$ such that $|\nabla\rho_n(\y-\x)\cdot(\velocity(\x,t)- \velocity(\y,t))|\le C_{\velocity} |\y - \x|\,|\nabla \rho_n(\y-\x)|$. 
Noting that the sequence of functions $\z\mapsto |\z|\,|\nabla\rho_n(\z)|$ is bounded in $L^1(\R^d)$, Young's inequality for convolution shows that the first term in the right-hand side of \eqref{an.cut} is bounded in $L^2(\R^d\times (0,T))$. 
The same Young inequality also easily shows that the second term in this right-hand side is also bounded in the same space, which proves that $a_n$ itself remains bounded in  $L^2(\R^d\times(0,T))$. 
The weak convergence of $a_n$ therefore only needs to be assessed for smooth functions.
Taking $\psi\in C^\infty_c(\R^d\times(0,T))$, we have
\[
 \int_0^T\int_{\R^d} a_n(\y,t)\psi(\y,t) \d\y \d t = - \int_0^T\int_{\R^d}\int_{\R^d}\bar u(\y+\z,t) \rho_n(\z)\ (\velocity(\y+\z,t)- \velocity(\y,t))\cdot\grad\psi(\y,t)\d \z\d\y \d t.
\]
Hence, using the Lipschitz continuity of $\velocity$ and the fact that $\rho_n$ is supported in the ball centred at $0$ and of radius $1/n$, there exists $C>0$ depending only on $\bar u, \velocity$ and $\psi$ such that
\[
 \left|\int_0^T\int_{\R^d} a_n(\y,t)\psi(\y,t) \d\y \d t\right|\le \frac C n.
\]
Hence $a_n$ converges to $0$ weakly in $L^2(\R^d\times(0,T))$, which concludes  the proof of \eqref{eq:limtermt4a} and of the lemma.
\end{proof}

\begin{corollary}[Uniqueness]\label{lem:uniq}
{Assuming} \eqref{hypgen}, there exists at most one solution $\bar u$ to \eqref{convweakeq}.
\end{corollary}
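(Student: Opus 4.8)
The plan is to exploit the linearity of the weak formulation \eqref{convweakeq} together with the energy identity of Lemma \ref{lem:enerest}. I would start by taking two solutions $\bar u_1$ and $\bar u_2$ of \eqref{convweakeq} associated with the same data $u_{\rm ini}$ and $f$, and setting $w = \bar u_1 - \bar u_2\in L^2(\O\times(0,T))$. Since every term in \eqref{convweakeq} is linear in the pair consisting of the unknown and the data, subtracting the two weak formulations shows that $w$ is itself a solution of \eqref{convweakeq}, but now with vanishing initial datum $u_{\rm ini}=0$ and vanishing source $f=0$. These homogeneous data trivially satisfy \eqref{hypuinif}, and the remaining assumptions \eqref{hypgen} are unchanged, so Lemma \ref{lem:enerest} applies to $w$.

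Next I would write down the energy identity \eqref{convweakeqrenorm} for $w$. Because $u_{\rm ini}$ and $f$ have both been replaced by $0$, the entire right-hand side of \eqref{convweakeqrenorm} vanishes, leaving
\[
\half\int_0^T\int_\O  w(\x,t)^2 \d \x \d t  + \half \int_0^T(T-t)\int_\O (q^I(\x,t) + q^P(\x,t))\, w(\x,t)^2\d \x \d t = 0.
\]
I would then invoke the sign assumptions \eqref{hypsource}, namely $q^I\ge 0$ and $q^P\ge 0$ a.e., to conclude that both terms on the left are nonnegative. A relation of the form ``nonnegative $+$ nonnegative $=0$'' forces each summand to vanish; in particular the first one gives $\int_0^T\int_\O w^2\,\d \x \d t = 0$, hence $w=0$ a.e.\ in $\O\times(0,T)$, that is $\bar u_1=\bar u_2$. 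This establishes that \eqref{convweakeq} admits at most one solution.

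The argument is essentially immediate once the energy estimate is available, so there is no genuine obstacle at the level of the corollary itself. The only point requiring a moment of care is the verification that the difference $w$ really solves the zero-data problem in the sense of \eqref{convweakeq}, and that Lemma \ref{lem:enerest} was proved for arbitrary data compatible with \eqref{hypgen} rather than for one fixed choice, so that it may legitimately be specialised to the homogeneous data. All the analytic difficulty of the uniqueness proof — in particular the commutator/renormalisation estimate \eqref{eq:limtermt4a} needed to justify taking $\bar u$ itself (through a mollification) as a test function — has already been absorbed into the proof of the energy estimate, and nothing further is needed here.
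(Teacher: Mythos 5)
Your argument is correct and is precisely the one in the paper: the difference of two solutions solves \eqref{convweakeq} with $f=0$ and $u_{\rm ini}=0$, and the energy identity \eqref{convweakeqrenorm} together with the sign conditions \eqref{hypsource} forces it to vanish a.e. Your closing remarks about where the real analytic work lives (in Lemma \ref{lem:enerest}) match the paper's structure as well.
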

\begin{proof}
 The difference of two solutions to \eqref{convweakeq} is a solution for the same problem with right-hand-side $f=0$ and initial condition $u_{\rm ini} = 0$. 
 The energy estimate \eqref{convweakeqrenorm} shows that this difference is a.e.\ equal to 0.
\end{proof}

\section{The gradient discretisation method for the linear advection equation}\label{sec:gdm}

The gradient discretisation method (GDM) is a general framework for nonconforming approximations of elliptic or parabolic problems, see \cite{gdmbook} for a general presentation of the method and of some models and schemes it applies to.

The principle of the GDM is to design a set of discrete elements (space, operators) called a gradient discretisation (GD), which is substituted in the weak formulation of the PDE \textit{in lieu} of the related continuous elements leading to a discretisation scheme. 

\begin{definition}[Gradient discretisation]\label{defgraddisc}
Let $p\in (1,+\infty)$ be given and let $p'\in (1,+\infty)$ with $\frac 1 p + \frac 1 {p'} = 1$. A gradient discretisation $\disc$ is defined by $\disc =(X_{\disc},\Pi_\disc,\nabla_\disc)$ where:
\begin{enumerate}
\item the set of discrete unknowns $X_{\disc}$ is a finite dimensional vector space on $\R$,
\item the linear mapping $\Pi_\disc~:~X_{\disc}\to L^{\max(2,p')}(\O)$ reconstructs functions,
\item the linear mapping $\nabla_\disc~:~X_{\disc}\to L^{\max(2,p)}(\O)^d$ reconstructs approximations of their gradients, 
\item the quantity $\Vert \cdot \Vert_{\disc} := \Vert \Pi_\disc \cdot \Vert_{L^2(\O)}+\Vert \nabla_\disc \cdot \Vert_{L^p(\O)^d}$ defines a norm on $X_{\disc}$.
\end{enumerate}
\end{definition}
\begin{remark} 
In the above definition, the definition of the norm is not standard in the GDM setting (in the sense of \cite[Definition 2.1]{gdmbook}), because of the simultaneous use of the $L^p$, $L^{p'}$ and $L^2$ norms. 
\end{remark}

This notion is extended to evolution problems in the following definition.

\begin{definition}[Space-time gradient discretisation]\label{adisct}
A family $\disc^T = (X_{\disc}, \Pi_\disc,\nabla_\disc, \interp_\disc,(t^{(n)})_{n=0,\ldots,N})$ is a space-time gradient discretisation if
\begin{itemize}
\item $\disc=(X_{\disc}, \Pi_\disc,\nabla_\disc)$ is a gradient discretisation of $\O$, in the sense of Definition \ref{defgraddisc},
\item $\interp_\disc~:~L^2(\O)\to X_{\disc}$ is an interpolation operator,
\item $t^{(0)}=0<t^{(1)}\ldots<t^{(N)}=T$.
\end{itemize}
We then set $\dt^{(n+\half)} = t^{(n+1)} -t^{(n)}$, for $n=0,\ldots,N-1$, and 
$\dt_\disc = \max_{n=0,\ldots,N-1} \dt^{(n+\half)}$.
\end{definition}

The properties of GDs are assessed through the two following functions $S_{\disc}$ and $W_{\disc}$. The first one measures an interpolation error:
\be\begin{aligned}
&S_{\disc}:W^{2,\infty}(\O)\to [0,+\infty)\mbox{ such that, for $\varphi\in W^{2,\infty}(\O)$},\\
&S_{\disc}(\varphi) = \min_{v\in X_{\disc}}\left(\Vert \Pi_\disc v - \varphi\Vert_{L^{\max(2,p')}(\O)} + \Vert \nabla_\disc v -
\nabla\varphi\Vert_{L^{\max(2,p)}(\O)^d}\right),
\end{aligned}
\label{defsdisc}\ee
whilst the second one is a measure of a conformity defect ({\it i.e.} the defect in a discrete integration-by-parts formula): letting $W^{1,\infty}_{\bm{n},0}(\O)^d$ be the set of elements of $W^{1,\infty}(\O)^d$ with zero normal trace on $\partial\O$,
\be
\begin{aligned}
&W_{\disc}:W^{1,\infty}_{\bm{n},0}(\O)^d\to [0,+\infty)\mbox{ such that, for $\bvarphi\in W^{1,\infty}_{\bm{n},0}(\O)^d$},\\
& W_{\disc}(\bm{\varphi}) = \max_{u\in X_{\disc}\setminus\{0\}}\frac 1 {\Vert  u \Vert_{\disc}}\left\vert
\int_\O \left(\gradD u(\x)\cdot\bvarphi(\x) + \Pi_\disc u(\x) \div\bvarphi(\x)\right)  \d\x \right\vert.
\end{aligned}
\label{defwdisc}\ee

Let us now define the space size of a GD relative to some regularity spaces. {This definition, which holds for both mesh-based and meshless methods, is a measure of the approximation properties of a given GD (this notion is defined in the framework of elliptic problems with homogeneous boundary conditions in \cite[Definition 2.22]{gdmbook})}. 

\begin{definition}[Space size of a GD]\label{def:sizegd}
Let $\disc$ be a gradient discretisation. The space-size of $\disc$ is $h_\disc$ defined by
\begin{align}
&h_\disc = \max\left(\sup\bigg\{  \dsp \frac {S_\disc(\varphi)} {\Vert \varphi\Vert_{W^{2,\infty}(\O)} }\,:\,\varphi\in W^{2,\infty}(\O)\setminus\{0\}\bigg\};
\sup\bigg\{ \dsp \frac {W_\disc(\bm{\varphi})} {\Vert \bm{\varphi}\Vert_{W^{1,\infty}_{\bm{n},0}(\O)^d} }\,:\,\bm{\varphi}\in W^{1,\infty}_{\bm{n},0}(\O)^d \setminus\{0\} \bigg\}\right).\label{eq:defhdisc}
\end{align}
\end{definition}

\begin{remark}[Link between $h_\disc$ and the size of the mesh for mesh-based GDs]
 {
In the case of the mesh-based GDs detailed in \cite[Chapters 8-14]{gdmbook}, $h_{\disc}$ is related to the size of the mesh $h_\mesh$ by $h_\disc\le C h_\mesh$ (see, e.g., \cite[Remark 2.24]{gdmbook}).}

\end{remark}

\begin{definition}[Consistent and limit-conforming sequence of space-time gradient discretisation]\label{def:conslimconf}~\\
A sequence $(\disc^T_m)_{m\in\N}$ of space-time gradient discretisations is said to be consistent and limit-conforming if $h_{\disc_m}$, $\dt_{\disc_m}$ and, for all $w\in L^2(\O)$, $\Vert w -\Pi_{\disc_m} \interp_{\disc_m} w\Vert_{L^2(\O)}$ tend to 0 as $m\to\infty$.
\end{definition}

\begin{remark}[Link with the core properties of a GD in the framework of elliptic or parabolic problems]

 {An adaptation of \cite[Lemma 2.25]{gdmbook} to elliptic problems with homogeneous Neumann boundary conditions yields an equivalence between Definition \ref{def:conslimconf} and \cite[Definitions 3.4 and 3.5]{gdmbook} of consistent and limit-conforming sequences of  gradient discretisations, assuming that the sequence  $(\disc^T_m)_{m\in\N}$ is compact (this holds true for the GDs detailed in \cite[Chapters 8-14]{gdmbook}).
}
 
\end{remark}

Given a space--time gradient discretisation $\disc^T = (X_{\disc}, \Pi_\disc,\nabla_\disc,\interp_\disc,(t^{(n)})_{n=0,\ldots,N})$ (in the sense of Definition \ref{adisct}), we now describe the gradient scheme defined from this GD. 
%Let $\Lambda$ be a measurable mapping from $\O\times(0,T)$ to the set of symmetric $d\times d$ real matrices, with eigenvalues between $\underline{\lambda}>0$ and $\overline{\lambda}\ge\underline{\lambda}$.
For $n=0,\ldots,N-1$ and a given space-time function $g\in L^1(\O\times(0,T))^\ell$ with $\ell=1$, $\ell=d$ or $\ell=d\times d$ ($g$ could be $\Lambda$, $f$, $\velocity$, $q^I$ or $q^P$), set, for a.e.\ $\x\in\O$ and for all $n=0,\ldots,N-1$,
\begin{equation}\label{def:gdiscn}
g^{(n+\half)}(\x) = \frac 1 {\dt^{(n+\half)}} \int_{t^{(n)}}^{t^{(n+1)}} g(\x,t) \d t \hbox{ and }g_\disc(\x,t)=g^{(n+\half)}(\x)\hbox{ for a.e. }t\in (t^{(n)},t^{(n+1)}).
\end{equation}
Let $\theta\in [\frac12,1]$ and $\alpha\in (0,p)$. The ($\theta$-implicit) scheme for Problem \eqref{convweakeq} is defined by replacing the continuous space and operators in \eqref{convweakeqreg} with their discrete counterparts given by $\disc$, as follows: find $u=(u^{(n)})_{n=0,\ldots,N}$ such that
\be
\left\{
\ba
u^{(0)} = \interp_\disc u_{\rm ini}\mbox{ and, for $n = 0,\ldots,N-1$, $u^{(n+1)} \in X_{\disc}$ is such that,}\\
\mbox{setting }\delta_\disc^{(n+\half)} u 
   = \Pi_\disc \dfrac {u^{(n+1)} - u^{(n)}} {\dt^{(n+\half)}} \mbox{ and } u^{(n+\theta)}=\theta u^{(n+1)}+(1-\theta) u^{(n)},
\\
\dsp \int_\O 
\left( \delta_\disc^{(n+\half)} u  \Pi_\disc v  +
  \half \grad_\disc u^{(n+\theta)} \cdot \velocity^{(n+\half)} \Pi_\disc v -\half \Pi_\disc u^{(n+\theta)} \velocity^{(n+\half)}\cdot\grad_\disc v  \right.
\\
\dsp 
\qquad\left. 
+\half \Pi_\disc u^{(n+\theta)} \left[ (q^I)^{(n+\half)}+  (q^P)^{(n+\half)}\right] \Pi_\disc v+ h_\disc^\alpha|\gradD u^{(n+\theta)}|_\Lambda^{p-2} \Lambda^{(n+\half)}\gradD u^{(n+\theta)} \cdot\gradD v
\right)
\d\x 
\\ 
\dsp \qquad\qquad= 
\int_\O f^{(n+\half)}     (q^I)^{(n+\half)}  \Pi_\disc v  \d\x,
 \qquad \forall v\in X_{\disc},
\ea
\right.
\label{eq:gradscheme}
\ee
denoting for short
\be
|\gradD u^{(n+\theta)}|_\Lambda = \sqrt{\Lambda^{(n+\half)}\gradD u^{(n+\theta)}\cdot\gradD u^{(n+\theta)}}.
\label{eq:norlambda}\ee
We introduce the following notations $\Pi^\theta_\disc$ and $\gradDt$ for reconstructed space-time functions: given $v=(v_n)_{n=0,\ldots,N}$ in $X_\disc^{N+1}$, we set 
\be\begin{aligned}
&\Pi_\disc^\theta v(\x,t) = \Pi_\disc v^{(n+\theta)}(\x)\mbox{ and }  \gradDt v(\x,t) = \gradD v^{(n+\theta)}(\x)\,,\\
&\hbox{for a.e.\ }(\x,t)\in \O\times(t^{(n)},t^{(n+1)}],\ \forall n=0,\ldots,N-1.
\end{aligned}
\label{defuappt}\ee
We extend these definitions to $t=0$ by setting $\Pi_\disc v(\x,0)=\Pi_\disc v_0$ and $\gradD v(\x,0)=\gradD v_0$.

\section{Convergence analysis}\label{sec:cv}

Our main convergence result is stated in the following theorem. 
We recall (see \cite[Definition 2.11]{DE15}) that a sequence $(v_n)_{n\in\N}$ of bounded functions $[0,T]\to L^2(\O)$ is said to converge uniformly on $[0,T]$ weakly in $L^2(\Omega)$ towards a function $v$ if for all $\phi\in L^2(\O)$, the sequence of functions $t\in [0,T]\mapsto \langle v_n(t),\phi\rangle_{L^2(\O)}\in\R$ converges uniformly on $[0,T]$ towards the function $t\mapsto \langle v(t),\phi\rangle_{L^2(\O)}$.

\begin{theorem}[Convergence of the GDM]\label{cvgce}
{Assuming} \eqref{hypgen}, let  $(\disc^T_m)_{m\in\N}$ be a consistent and limit-conforming sequence of space-time gradient discretisations in the sense of Definition \ref{def:conslimconf}.  Let $\theta\in [\half,1]$, $p\in(1,+\infty)$  and $\alpha\in (0,p)$ be given. Then, for any  $m\in\N$, there exists a unique $u_{m}$ solution to Scheme \eqref{eq:gradscheme} with $\disc^T=\disc_m^T$.

Moreover, as $m\to\infty$, $\Pi_{\disc_m}^\theta u_m$ converges in $L^2(\O\times(0,T))$, and uniformly on $[0,T]$ weakly in $L^2(\O)$, to the unique solution $\bar u$ of Problem \eqref{convweakeq}.
\end{theorem}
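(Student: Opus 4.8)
I would follow the standard three-step GDM programme—well-posedness of the scheme, uniform a priori estimates, and compactness plus passage to the limit—supplemented by an energy argument, in the spirit of Lemma~\ref{lem:enerest}, to upgrade weak to strong convergence. For fixed $m$ and each $n$, given $u^{(n)}$, the scheme \eqref{eq:gradscheme} determines $u^{(n+1)}\in X_{\disc}$ through a finite-dimensional nonlinear system $A_n(u^{(n+1)})=b_n$. The operator $A_n$ gathers the mass term $\frac{\theta}{\dt^{(n+\half)}}\Pi_\disc(\cdot)$, the skew-symmetric advective part (antisymmetric, hence monotonicity-neutral), the nonnegative zero-order term $\half(q^I+q^P)$, and the $p$-Laplace stabilisation. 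Since $\Lambda^{(n+\half)}$ is symmetric positive definite with eigenvalues bounded below, the stabilisation is strictly monotone and the mass term is coercive for $\|\cdot\|_{\disc}$; thus $A_n$ is continuous, strictly monotone and coercive, and existence and uniqueness of $u^{(n+1)}$ follow from the theory of monotone operators.

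\textbf{A priori estimates.} Taking $v=u^{(n+\theta)}$ in \eqref{eq:gradscheme}, the two advective terms cancel exactly by the skew-symmetric design. Using the elementary inequality $(a-b)(\theta a+(1-\theta)b)\ge\half(a^2-b^2)$, valid for $\theta\ge\half$, on the mass term, the nonnegativity of $q^I+q^P$, and the lower bound on $\Lambda$, a first summation over $n$ weighted by $\dt^{(n+\half)}$ and a second one weighted by $T-t^{(n)}$ reproduce the structure of \eqref{convweakeqrenorm} and yield $m$-independent bounds on $\sup_n\|\Pi_\disc u^{(n)}\|_{L^2(\O)}$ and on $\|\Pi_{\disc_m}^\theta u_m\|_{L^2(\O\times(0,T))}$, together with the degenerate diffusion control $h_{\disc_m}^\alpha\|\gradDt u_m\|_{L^p(\O\times(0,T))^d}^p\le C$. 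The latter gives only $\|\gradDt u_m\|_{L^p}\le C\,h_{\disc_m}^{-\alpha/p}$; the hypothesis $\alpha<p$ is exactly what renders this controlled blow-up harmless in the limit.

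\textbf{Convergence and identification of the limit.} From the $L^2$ bound, a subsequence of $\Pi_{\disc_m}^\theta u_m$ converges weakly in $L^2(\O\times(0,T))$ to some $\bar u$. To identify $\bar u$, I would test \eqref{eq:gradscheme} with an interpolant realising $S_{\disc_m}(\psi)$ for $\psi\in C_c^\infty(\R^d)$, multiply by $\Theta(t^{(n)})$ with $\Theta\in C_c^\infty([0,T))$, and sum over $n$. The accumulation term reproduces $-\int_0^T\!\int_\O\bar u\,\partial_t(\Theta\psi)-\int_\O u_{\rm ini}\,\Theta(0)\psi$ after an Abel summation and use of $u^{(0)}=\interp_{\disc_m}u_{\rm ini}$ and consistency. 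The delicate contribution is $\half\int\gradDt u_m\cdot\velocity\,\Pi_{\disc_m}\psi$, where $\gradDt u_m$ is uncontrolled: I would first replace $\Pi_{\disc_m}\psi$ by $\psi$ (the error being bounded by $\|\gradDt u_m\|_{L^p}S_{\disc_m}(\psi)\le C\,h_{\disc_m}^{1-\alpha/p}\to0$), then use limit-conformity \eqref{defwdisc} with the admissible field $\velocity^{(n+\half)}\psi\in W^{1,\infty}_{\bm{n},0}(\O)^d$ to replace $\int\gradDt u_m\cdot(\velocity\psi)$ by $-\int\Pi_{\disc_m}^\theta u_m\,\div(\velocity\psi)$, up to an error $\le W_{\disc_m}\|u_m\|_{\disc}\le C\,h_{\disc_m}^{1-\alpha/p}\to0$. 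With $\div\velocity=q^I-q^P$, this half-gradient term recombines with the remaining advective and zero-order terms into $-\bar u\,\velocity\cdot\grad\psi+\bar u\,q^P\psi$, i.e.\ exactly the terms of \eqref{convweakeq}. The stabilisation term is controlled by $C\,h_{\disc_m}^{\alpha/p}\to0$ and drops out, so $\bar u$ solves \eqref{convweakeq}; by Corollary~\ref{lem:uniq} the limit is unique, the whole sequence converges, and existence for \eqref{convweakeq} comes as a by-product.

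\textbf{Uniform-in-time weak and strong convergence.} To obtain uniform-on-$[0,T]$ weak-$L^2$ convergence I would bound, via \eqref{eq:gradscheme} tested with a fixed interpolant, the time oscillations of $t\mapsto\langle\Pi_{\disc_m}u_m(t),\phi\rangle$, and conclude by a discrete Ascoli--Arzel\`a argument as in \cite{DE15}. The main obstacle is the strong $L^2$ convergence: I expect to pass to the limit in the weighted discrete energy equality obtained from $v=u^{(n+\theta)}$—discarding the nonnegative diffusion term via lower semicontinuity—and show that its limit is the continuous energy identity \eqref{convweakeqrenorm} for $\bar u$, possibly after a regularisation as in the proof of Lemma~\ref{lem:enerest}. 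Matching $\limsup_m\|\Pi_{\disc_m}^\theta u_m\|_{L^2}^2$ with $\|\bar u\|_{L^2}^2$ then promotes the weak convergence to strong convergence.
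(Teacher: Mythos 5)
Your proposal follows the paper's proof in all essentials: the same test function $v=u^{(n+\theta)}$ and the inequality $(a-b)(\theta a+(1-\theta)b)\ge\tfrac12(a^2-b^2)$ for the a priori bounds, the same use of $W_{\disc}$ applied to $w\velocity\in W^{1,\infty}_{\bm{n},0}(\O)^d$ together with the weak $BV$-type estimate $h_{\disc}^{\alpha}\Vert\gradDt u_m\Vert_{L^p}^p\le C$ (so that errors scale as $h_{\disc}^{1-\alpha/p}\to 0$ and the stabilisation as $h_{\disc}^{\alpha/p}\to 0$), the same discrete Ascoli--Arzel\`a argument for uniform-in-time weak convergence, and the same strategy for strong convergence (weighted discrete energy inequality, $\limsup$, substitution of the right-hand side via the continuous energy identity of Lemma \ref{lem:enerest}, expansion of the square with the weight $1+(T-t)(q^I+q^P)\ge 1$).

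The one point where you genuinely depart from the paper is the existence of the discrete solution. The paper treats $p=2$ by a linear-algebra argument (the a priori bounds force a trivial kernel, hence invertibility) and $p\ne 2$ by a topological degree homotopy connecting the $p$-Laplace stabilisation to the linear one; you instead invoke monotone-operator theory for the map $A_n$, using that the skew-symmetric advective part is antisymmetric and hence drops out of the monotonicity computation. This works and is arguably more direct, but two details need care: coercivity in $\Vert\cdot\Vert_{\disc}$ comes only from the \emph{combination} of the mass term (controlling $\Vert\Pi_\disc\cdot\Vert_{L^2}$) and the $p$-Laplace term (controlling $\Vert\nabla_\disc\cdot\Vert_{L^p}$), not from the mass term alone as your phrasing suggests; and since the nonlinearity is evaluated at $u^{(n+\theta)}=\theta u^{(n+1)}+(1-\theta)u^{(n)}$ rather than at the unknown $u^{(n+1)}$ itself, both the monotonicity and the coercivity computations carry the known datum $u^{(n)}$ inside the $p$-Laplace argument and require the short extra estimate showing this does not destroy either property. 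With these points made explicit, your route gives the same conclusion as the paper's.
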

\begin{remark}[Theoretical order of convergence]
{Assuming sufficient smoothness of the continuous solution and the velocity field, and letting $p=2$,
it seems possible to derive a theoretical error estimate in $L^\infty(0,T;L^2(\Omega))$ norm with order $h_\disc^{\min(\alpha,2-\alpha)}$. 
This provides a maximal order 1 if $\alpha=1$.
However, in the numerical tests with a regular solution (see Section \ref{sec:resnumcase2}), much better numerical
orders of convergence are obtained, even letting $\alpha = 2$. The question of the theoretical derivation
of these better rates remains open.}
\end{remark}
The uniqueness component of this theorem is the most straightforward part, and the purpose of the following lemma.

 \begin{lemma}[Uniqueness of a discrete solution]  \label{lem:uniquediscrete}
  {Assuming} \eqref{hypgen}, let $\disc^T = (X_{\disc}, \Pi_\disc,\nabla_\disc,\interp_\disc,(t^{(n)})_{n=0,\ldots,N})$ be a space-time gradient discretisation in the sense of Definition \ref{adisct}.  Let $\theta\in [\half,1]$, $p\in(1,+\infty)$  and $\alpha\in (0,p)$ be given. Then there exists at most one solution to Scheme \eqref{eq:gradscheme}. 
\end{lemma}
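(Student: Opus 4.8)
The plan is to argue by induction on the time index $n$: assuming that $u^{(n)}$ is uniquely determined (which holds for $n=0$ since $u^{(0)}=\interp_\disc u_{\rm ini}$ is prescribed), I show that the equation in \eqref{eq:gradscheme} defining $u^{(n+1)}$ from $u^{(n)}$ admits at most one solution. Suppose $u^{(n+1)}$ and $\tilde u^{(n+1)}$ both solve this equation with the same $u^{(n)}$ and the same data, and set $w=u^{(n+1)}-\tilde u^{(n+1)}$. Writing \eqref{eq:gradscheme} for each solution and subtracting, the right-hand side and all the data-dependent source terms cancel, leaving a relation valid for every $v\in X_\disc$. By linearity of $\delta_\disc^{(n+\half)}$, $\Pi_\disc$ and $\gradD$, and since $u^{(n+\theta)}-\tilde u^{(n+\theta)}=\theta w$, the natural choice is to test this relation with $v=u^{(n+\theta)}-\tilde u^{(n+\theta)}=\theta w$, exactly mimicking the energy argument of Lemma~\ref{lem:enerest}.

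With this test function I expect each contribution to carry a definite sign. The time-derivative term produces $\frac{\theta}{\dt^{(n+\half)}}\Vert \Pi_\disc w\Vert_{L^2(\O)}^2\ge 0$. The two advection terms cancel each other exactly: this is precisely the skew-symmetry built into the reformulation \eqref{convweakeqreg}, which is the whole reason for discretising \eqref{convweakeqreg} rather than \eqref{convweakeq}. The reaction term yields $\frac{\theta^2}{2}\int_\O (\Pi_\disc w)^2\big[(q^I)^{(n+\half)}+(q^P)^{(n+\half)}\big]\,\d\x\ge 0$ since $q^I,q^P\ge 0$. Finally, the stabilisation term contributes
\[
h_\disc^\alpha\int_\O\Big(|\gradD u^{(n+\theta)}|_\Lambda^{p-2}\Lambda^{(n+\half)}\gradD u^{(n+\theta)}-|\gradD \tilde u^{(n+\theta)}|_\Lambda^{p-2}\Lambda^{(n+\half)}\gradD \tilde u^{(n+\theta)}\Big)\cdot\big(\gradD u^{(n+\theta)}-\gradD \tilde u^{(n+\theta)}\big)\,\d\x,
\]
which is nonnegative by monotonicity of the underlying operator.

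Summing, all four contributions are nonnegative and add up to zero, so each one vanishes. From the time-derivative term (using $\theta\ge\half>0$ and $\dt^{(n+\half)}>0$) I obtain $\Pi_\disc w=0$ a.e. The main obstacle is to extract $\gradD w=0$ from the stabilisation term: for this I rely on the \emph{strict} monotonicity of the map $\bxi\mapsto|\bxi|_\Lambda^{p-2}\Lambda\bxi$, which is the gradient of the function $\bxi\mapsto\frac1p|\bxi|_\Lambda^p$. Since $\Lambda^{(n+\half)}$ is symmetric positive definite, $|\cdot|_\Lambda$ is an inner-product norm and this function is strictly convex for every $p\in(1,+\infty)$; hence the integrand above is strictly positive wherever $\gradD u^{(n+\theta)}\ne\gradD \tilde u^{(n+\theta)}$, and the vanishing of its integral forces $\gradD u^{(n+\theta)}=\gradD \tilde u^{(n+\theta)}$ a.e., i.e.\ $\gradD w=0$.

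Combining $\Pi_\disc w=0$ and $\gradD w=0$ gives $\Vert w\Vert_\disc=0$, and since $\Vert\cdot\Vert_\disc$ is a norm on $X_\disc$ (item~4 of Definition~\ref{defgraddisc}) we conclude $w=0$, that is $u^{(n+1)}=\tilde u^{(n+1)}$. Propagating this through the induction yields uniqueness of the full discrete solution $u=(u^{(n)})_{n=0,\ldots,N}$.
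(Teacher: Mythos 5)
Your proof is correct and follows essentially the same route as the paper: induction on $n$, subtraction of the two equations, the test function $v=\theta w$, cancellation of the skew-symmetric advection terms, sign arguments for the remaining terms, and the norm axiom of Definition~\ref{defgraddisc} to conclude. The only cosmetic difference is that you justify the key nonnegativity/strictness of the stabilisation term via strict convexity of $\bxi\mapsto\frac1p|\bxi|_\Lambda^p$, whereas the paper invokes a quantitative monotonicity inequality for the $p$-Laplacian; both are valid.
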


\begin{proof}
The scheme defines exactly one approximation $u^{(0)}$. Let us assume that, for a given $n\in\N$ and for a given $u^{(n)}$, there exist two solutions $u^{(n+1)}$ and $\widehat u^{(n+1)}$ to  Scheme \eqref{eq:gradscheme}. Let us create the difference of the two equations \eqref{eq:gradscheme}, and let us choose $v = u^{(n+\theta)} - \widehat u^{(n+\theta)} = \theta (u^{(n+1)}-\widehat u^{(n+1)})$ in the resulting equation. We obtain
\be
\ba
\dsp \int_\O 
\left( \left[\frac {\theta}  {\dt^{(n+\half)}}    + \frac {\theta^2} {2} \ ( (q^I)^{(n+\half)}+  (q^I)^{(n+\half)})\right]  (\Pi_\disc u^{(n+1)} - \Pi_\disc \widehat u^{(n+1)} )^2\right.
\\
\dsp 
\qquad\left.  + h_\disc^\alpha\Lambda^{(n+\half)}(|\gradD u^{(n+\theta)}|_\Lambda^{p-2} \gradD u^{(n+\theta)} - |\gradD \widehat u^{(n+\theta)}|_\Lambda^{p-2} \gradD \widehat u^{(n+\theta)} )\cdot(\gradD u^{(n+\theta)} - \gradD \widehat u^{(n+\theta)} )
\right)
\d\x 
= 0.
\ea
\label{eq:gradschemedif}
\ee
It is classical (see for instance \cite[Lemma 2.40]{gdmbook} or \cite[Lemma 2.1]{bar1993fin}), that  
\[
 \forall \xi, \chi \in\R^d,\ (|\xi|^{p-2} \xi - |\chi|^{p-2} \chi )\cdot(\xi - \chi ) \ge \underline{\lambda}\min(\frac{p-1} 2,2^{1-p})  |\xi - \chi|^2\ (|\xi|+|\chi|)^{p-2}.
 \]
Applying this inequality in \eqref{eq:gradschemedif} with $\chi = (\Lambda^{(n+\half)})^{1/2} \gradD u^{(n+\theta)}$ and  $\xi = (\Lambda^{(n+\half)})^{1/2} \gradD \widehat u^{(n+\theta)}$ (in which the left hand side is therefore the sum of non-negative terms), we get that $\gradD u^{(n+\theta)} = \gradD \widehat u^{(n+\theta)}$ a.e., and therefore $\gradD u^{(n+1)} = \gradD \widehat u^{(n+1)}$ as well as $\Pi_\disc u^{(n+1)} = \Pi_\disc \widehat u^{(n+1)}$. Hence, thanks to the property of the norm assumed in Definition \ref{defgraddisc}, $u^{(n+1)}=\widehat u^{(n+1)}$, which concludes the proof of uniqueness by induction.

\end{proof}

The proof of Theorem \ref{cvgce} hinges on \emph{a priori} estimates stated in the following lemma.

 \begin{lemma}[$L^\infty(0,T;L^2(\O))$ and discrete $L^2(0,T;H^1_0(\O))$ estimates, existence of a discrete solution]  \label{estimldlp}
  {Assuming} \eqref{hypgen}, let $\disc^T = (X_{\disc}, \Pi_\disc,\nabla_\disc,\interp_\disc,(t^{(n)})_{n=0,\ldots,N})$ be a space-time gradient discretisation in the sense of Definition \ref{adisct}.  Let $\theta\in [\half,1]$, $p\in(1,+\infty)$  and $\alpha\in (0,p)$ be given. Then there exists one and only one solution to Scheme \eqref{eq:gradscheme}. Moreover, this solution satisfies, for all $k=1,\ldots,N$,
	\be
   \begin{aligned}
     \label{convweakeqhatdisc}
     \int_\O {}&\left( \half \Pi_\disc u^{(k)}(\x)^2 -  \half \Pi_\disc u^{(0)}(\x)^2\right)\d\x
       + h_\disc^\alpha\int_0^{t^{(k)}} \int_\O |\gradDt  u (\x,t)|_\Lambda^p\d\x \d t  
       \\
				&+ \half \int_0^{t^{(k)}}\int_\O  \Pi_\disc^\theta u(\x,t)^2 \left[q^I_\disc(\x,t)+q^P_\disc(\x,t)\right]\d\x \d t
       \le  \int_0^{t^{(k)}}\int_\O  f_\disc(\x,t)   q^I_\disc(\x,t) \Pi_\disc^\theta  u (\x,t) \d\x\d t,
   \end{aligned}
	\ee
and there exists $\ctel{estimld}>0$, depending only on $C_{\rm ini} \ge \Vert u_{\rm ini} -\Pi_\disc \interp_\disc u_{\rm ini}\Vert_{L^2(\O)}$, $\underline{\lambda}$, $f$ and $q^I$ such that
  \begin{equation} 
   \Vert\PiDt  u\Vert_{L^\infty(0,T;L^2(\O))} \le \cter{estimld}
   \label{estimfldeux}
  \end{equation}
and
  \begin{equation}
  h_\disc^\alpha \Vert\gradDt u \Vert_{L^p(\O\times(0,T))}^p \le  \cter{estimld}.
   \label{estimhund0}
  \end{equation}
\end{lemma}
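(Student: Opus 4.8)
The plan is to extract everything from a single \emph{a priori} computation: take $v=u^{(n+\theta)}$ as the test function in \eqref{eq:gradscheme}. The whole point of the skew-symmetric reformulation is that, with this choice, the two advection contributions $\half\gradD u^{(n+\theta)}\cdot\velocity^{(n+\half)}\Pi_\disc u^{(n+\theta)}$ and $-\half\Pi_\disc u^{(n+\theta)}\velocity^{(n+\half)}\cdot\gradD u^{(n+\theta)}$ are pointwise opposite and cancel exactly, which is the discrete substitute for the chain-rule identity \eqref{eq:calculchain}. The stabilisation term collapses to $h_\disc^\alpha|\gradD u^{(n+\theta)}|_\Lambda^p$ by \eqref{eq:norlambda}, and the reaction term becomes the nonnegative quantity $\half(\Pi_\disc u^{(n+\theta)})^2[(q^I)^{(n+\half)}+(q^P)^{(n+\half)}]$. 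Only the discrete time-derivative term $\int_\O\delta_\disc^{(n+\half)}u\,\Pi_\disc u^{(n+\theta)}\d\x$ then needs work.

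For that term I would apply, pointwise with $a=\Pi_\disc u^{(n+1)}$ and $b=\Pi_\disc u^{(n)}$, the elementary identity
\[
(a-b)\big(\theta a+(1-\theta)b\big)=\frac{a^2-b^2}{2}+\Big(\theta-\half\Big)(a-b)^2 .
\]
Since $\theta\ge\half$ the last term is nonnegative and may be dropped, giving $\int_\O\delta_\disc^{(n+\half)}u\,\Pi_\disc u^{(n+\theta)}\d\x\ge\frac{1}{2\dt^{(n+\half)}}\int_\O\big((\Pi_\disc u^{(n+1)})^2-(\Pi_\disc u^{(n)})^2\big)\d\x$. Multiplying the scheme inequality by $\dt^{(n+\half)}$, summing over $n=0,\dots,k-1$ so that the $L^2$ terms telescope, and rewriting the sums as space-time integrals via the piecewise-constant reconstructions of \eqref{def:gdiscn} and \eqref{defuappt} yields precisely \eqref{convweakeqhatdisc}.

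The two bounds then follow by absorption. Estimating the right-hand side of \eqref{convweakeqhatdisc} with Young's inequality, $f_\disc q^I_\disc\PiDt u\le\half f_\disc^2 q^I_\disc+\half q^I_\disc(\PiDt u)^2$, and moving the second term to the left (where it is dominated by the reaction term, since $q^I_\disc\ge0$), leaves on the left $\half\|\Pi_\disc u^{(k)}\|_{L^2(\O)}^2$, the stabilisation integral $h_\disc^\alpha\int_0^{t^{(k)}}\!\int_\O|\gradDt u|_\Lambda^p\,\d\x\d t$, and a nonnegative $q^P$-term, against a right-hand side bounded purely by data: $\|\Pi_\disc u^{(0)}\|_{L^2(\O)}\le\|u_{\rm ini}\|_{L^2(\O)}+C_{\rm ini}$, while the time-averaging \eqref{def:gdiscn} is an $L^2$-contraction and leaves the $L^\infty$ norm untouched, so $\int_0^T\!\int_\O f_\disc^2 q^I_\disc\,\d\x\d t\le\|q^I\|_{L^\infty}\|f\|_{L^2}^2$. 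This gives a constant $\cter{estimld}$ depending only on $C_{\rm ini}$, $f$, $q^I$; since $\|\PiDt u(\cdot,t)\|_{L^2(\O)}\le\max_{k}\|\Pi_\disc u^{(k)}\|_{L^2(\O)}$ by convexity, \eqref{estimfldeux} follows, and \eqref{estimhund0} follows after $|\gradDt u|_\Lambda^p\ge\underline{\lambda}^{p/2}|\gradDt u|^p$, which is where $\underline{\lambda}$ enters.

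It remains to prove existence (uniqueness being Lemma \ref{lem:uniquediscrete}), which I would do by induction on $n$. Given $u^{(n)}$, the scheme defines, after identifying $X_\disc$ with its dual, a continuous map $\Phi:X_\disc\to X_\disc$ (the nonlinearity $\xi\mapsto|\xi|_\Lambda^{p-2}\Lambda\xi$ being continuous for every $p\in(1,+\infty)$, including at $\xi=0$) whose zeros are the solutions $u^{(n+1)}$. Repeating the computation above with a generic $w\in X_\disc$ and testing against $w^\theta:=\theta w+(1-\theta)u^{(n)}$ gives
\[
\langle\Phi(w),w^\theta\rangle\ge\frac{1}{2\dt^{(n+\half)}}\|\Pi_\disc w\|_{L^2(\O)}^2+h_\disc^\alpha\underline{\lambda}^{p/2}\|\gradD w^\theta\|_{L^p(\O)^d}^p-C ,
\]
with $C$ depending only on $u^{(n)}$, $f$, $q^I$. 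Since this lower bound tends to $+\infty$ as $\|w\|_\disc\to\infty$, passing to the bijective affine variable $y=w^\theta$ and setting $\Psi(y):=\Phi(w)$ turns it into the coercivity $\langle\Psi(y),y\rangle>0$ for $\|y\|_\disc$ large, and a Brouwer (or topological-degree) argument then produces a zero, hence a solution $u^{(n+1)}$. I expect this existence step to be the main obstacle: unlike uniqueness it forces the coercivity to be read through the shifted variable $w^\theta$ and to control the $L^2$ and $L^p$ parts of $\|\cdot\|_\disc$ simultaneously, even though the mass and stabilisation terms scale with different powers and the advection term contributes nothing to the lower bound.
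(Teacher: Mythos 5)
Your derivation of the \emph{a priori} estimates is exactly the paper's: test with $v=\dt^{(n+\half)}u^{(n+\theta)}$, use the pointwise cancellation of the two skew-symmetric advection terms, the identity $(a-b)(\theta a+(1-\theta)b)=\half(a^2-b^2)+(\theta-\half)(a-b)^2$ with the last term dropped, telescoping, and then the same Young/Jensen absorption of the source term into the $q^I+q^P$ reaction term to obtain \eqref{estimfldeux} and \eqref{estimhund0}. Where you genuinely depart from the paper is the existence step. The paper splits into cases: for $p=2$ each time step is a square linear system whose kernel is killed by the \emph{a priori} bounds, hence invertibility; for $p\neq 2$ it runs a topological-degree homotopy $\nu|\gradD u^{(n+\theta)}|_\Lambda^{p-2}\gradD u^{(n+\theta)}+(1-\nu)\gradD u^{(n+\theta)}$, checks that the estimates hold uniformly in $\nu\in[0,1]$, and transports the nonzero degree from the linear endpoint. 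You instead prove coercivity of the nonlinear map directly, tested against the shifted variable $w^\theta=\theta w+(1-\theta)u^{(n)}$, and invoke the Brouwer-type zero lemma on a large ball. This is correct: the affine change of variable $y=w^\theta$ is bijective since $\theta\ge\half>0$, the lower bound $\tfrac{1}{2\dt^{(n+\half)}}\Vert\Pi_\disc w\Vert_{L^2(\O)}^2+h_\disc^\alpha\underline{\lambda}^{p/2}\Vert\gradD w^\theta\Vert_{L^p(\O)^d}^p-C$ diverges whenever either component of $\Vert w\Vert_\disc$ does (the differing powers are harmless, as each term separately controls one component), and equivalence of norms on the finite-dimensional $X_\disc$ lets you place the sphere on which $\langle\Psi(y),y\rangle>0$. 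What your route buys is uniformity in $p$ — no case split and no verification of estimates along a homotopy; what the paper's route buys is that the $p=2$ case comes for free from linear algebra and the degree argument recycles the already-proved estimates almost verbatim. Both are valid proofs of the same statement.
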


\begin{remark}[Weak $BV$ estimate] 
The estimate \eqref{estimhund0} is the adaptation in the GDM framework of the classical weak $BV$ estimate used for finite volumes see \cite{cha-92-wea} for the seminal paper and \cite[chapters 5 \& 6]{book} for more general results. 
This estimate is used in two occasions: first to pass to the limit in the skew-symmetric term, and second to show that the stabilisation term vanishes at the limit. 	
\end{remark}

\begin{proof}
Before establishing the existence of at least one discrete solution to Scheme \eqref{eq:gradscheme}, let us first prove that any solution to this scheme satisfies \eqref{convweakeqhatdisc}--\eqref{estimhund0}. 
We first notice that for all $a,b\in\R$,  
\begin{align*}
(a-b)(\theta a + (1-\theta)b) ={}& 
(a-b)\left[\left(\theta-\half\right) a + \left(\half-\theta\right)b\right]
+\half (a-b)(a+b)\\
={}& \left(\theta-\half\right)(a-b)^2+ \half (a^2 - b^2)\ge
\half (a^2 - b^2).
\end{align*}
Hence, letting $v = \dt^{(n+\half)} u^{(n+\theta)}$ in \eqref{eq:gradscheme} and applying the
estimate above with $a=\Pi_\disc u^{(n+1)}$ and $b=\Pi_\disc u^{(n)}$, we obtain
\begin{multline*}
\dsp \int_\O 
\Bigg( 
  %terme temporel
   \half (\Pi_\disc u^{(n+1)})^2 - \half (\Pi_\disc u^{(n)})^2
   %terme diffusion
+\half \dt^{(n+\half)}(\Pi_\disc u^{(n+\theta)})^2 \left[ (q^I)^{(n+\half)}+  (q^I)^{(n+\half)}\right]\\
 + \dt^{(n+\half)}h_\disc^\alpha\ |\gradD u^{(n+\theta)}|_\Lambda^p \Bigg)
\d\x 
%terme source
\le \dt^{(n+\half)}
\int_\O f^{(n+\half)}     (q^I)^{(n+\half)}  \Pi_\disc u^{(n+\theta)}  \d\x.
\end{multline*}
Taking $k=1,\ldots,N$ and summing this inequality over $n=0,\ldots,k-1$ proves \eqref{convweakeqhatdisc}. 

The Young inequality and the property $0\le  q^I_\disc \le  q^I_\disc +  q^P_\disc$ yield
\begin{align*}
f_\disc  q^I_\disc \Pi_\disc^\theta u 
\le 
  \dfrac{1}{2} (\Pi_\disc^\theta u)^2  \left[ q^I_\disc+  q^P_\disc\right]+ \dfrac{1}2 (f_\disc)^2   q^I_\disc.
\end{align*}
Plugging this into \eqref{convweakeqhatdisc} leads to
\begin{align}
\frac12\norm{\Pi_\disc u^{(k)}}{L^2(\O)}^2 + h_\disc^\alpha \underline{\lambda}^{p/2} \norm{\gradDt u}{L^p(\O\times (0,t^{(k)}))}^p
\le{}& \frac12\norm{\Pi_\disc \interp_\disc u_{\rm ini}}{L^2(\O)}^2 + \frac12 \int_0^{t^{(k)}}\int_\O (f_\disc)^2   q^I_\disc\d\x \d t\nonumber\\
\le{}& \frac12\norm{\Pi_\disc \interp_\disc u_{\rm ini}}{L^2(\O)}^2 + \norm{f}{L^2(\O\times(0,T))}
\norm{q^I}{L^\infty(\O\times(0,T))},
\label{interm}
\end{align}
where we have used the Jensen inequality to bound the $L^2$-norm of $f_\disc$ by the $L^2$-norm of $f$.
Estimate \eqref{estimhund0} directly follows from \eqref{interm} with $k=N$. 
Estimate \eqref{estimfldeux} is also a consequence of \eqref{interm}, once we notice that $\Pi_\disc^\theta u(\x,t)=\theta \Pi_\disc u^{(n+1)}(\x)+(1-\theta)\Pi_\disc u^{(n)}(\x)$ for a.e.\ $\x\in\O$, all $t\in (t^{(n)},t^{(n+1)})$ and all $n=0,\ldots,N-1$.

\medskip

We can now prove the existence of a solution to Scheme \eqref{eq:gradscheme} (the uniqueness is proved in Lemma \ref{lem:uniquediscrete}). If $p=2$ then, at each time step, \eqref{eq:gradscheme} describes a linear square system on $u^{(n+\theta)}$ (after substituting $u^{(n+1)}=u^{(n)}+\theta^{-1}(u^{(n+\theta)}-u^{(n)})$). The estimates \eqref{estimfldeux} and \eqref{estimhund0} show that any solution $u^{(n+\theta)}$ to this system satisfies \emph{a priori} bounds. The kernel of the matrix of this linear system is therefore reduced to $\{0\}$, and the matrix is invertible, which establishes the existence of a unique solution $u^{(n+\theta)}$ (and thus of $u^{(n+1)}$) to the system at time step $n+1$.

If $p\neq 2$ we use the topological degree \cite{deimling}. Let us assume the existence of $u^{(n)}$. 
Let us substitute the term $|\gradD u^{(n+\theta)}|_\Lambda^{p-2}\gradD u^{(n+\theta)}$ of the scheme by $\nu|\gradD u^{(n+\theta)}|_\Lambda^{p-2}\gradD u^{(n+\theta)}+(1-\nu)\gradD u^{(n+\theta)}$ for $\nu\in [0,1]$. 
It is clear that the above estimates still hold (again after substituting $u^{(n+1)}=u^{(n)}+\theta^{-1}(u^{(n+\theta)}-u^{(n)})$) so that $\norm{\Pi_\disc u^{(n+1)}}{L^2(\O)}$ and $\nu\Vert\gradD u^{(n+1)} \Vert_{L^p(\O)}^p +
  (1-\nu) h_\disc^\alpha \Vert\gradD u^{(n+1)} \Vert_{L^2(\O)}^2 $ remain bounded independently of $\nu$. 
  We infer from this latter estimate a bound on $\Vert u^{(n+1)} \Vert_{\disc}$ that is uniform with respect to $\nu$. 
  Hence, all solutions to the scheme with the above substitution remain bounded independently of $\nu$. 
  This shows that, on a large enough ball, the topological degree of the non-linear mapping defining the scheme is independent of $\nu$. 
  For $\nu=0$ this mapping is linear and the arguments developed in the case $p=2$  show that its topological degree is non-zero. 
  The degree for the original scheme (corresponding to $\nu=1$) is therefore also non-zero, proving that this scheme has at least one solution. \end{proof}

We can now prove our convergence results, starting with the uniform-in-time weak-in-space convergence.

\begin{proof}[Proof of Theorem \ref{cvgce}: uniform-in-time weak-in-space convergence]
Owing to \eqref{estimfldeux} there is $\bar u\in L^2(\O\times(0,T))$ and a subsequence of  $(\disc_m)_{m\in\N}$ such that $\Pi^\theta_{\disc_m} u_m$ converges to $\bar u$ in $L^\infty(0,T;L^2(\O))$ weak-$\star$ as $m\to\infty$.
Let $m\in \N$, and let us denote $\disc = \disc_m$ (belonging to the above subsequence); we drop some indices $m$ to simplify the notations.

Let $\varphi\in C^\infty_c([0,T))$ and $w\in C^\infty_c(\R^d)$, and let $P_m w\in X_\disc$ that realises the minimum in $ S_{\disc}(w)$. 
We denote by $P_m \varphi:(0,T)\to\R$ the function equal to $\varphi^{(n+1-\theta)}:=\theta \varphi(t^{(n)})+(1-\theta) \varphi(t^{(n+1)})$, on $(t^{(n)},t^{(n+1)})$, for all $n=0,\ldots,N-1$.

For $n=0,\ldots,N-1$ and $t\in (t^{(n)},t^{(n+1)})$, let $u^{(\theta)}_m(t)=\theta u_m^{(n+1)}+(1-\theta)u_m^{(n)}\in X_\disc$, and notice that $\PiDt u_m(t)=\Pi_\disc u_m^{(\theta)}(t)$ and $\gradDt u_m(t)=\gradD u_m^{(\theta)}(t)$.
By definition \eqref{defwdisc} of $W_\disc$ and \eqref{eq:defhdisc} of $h_\disc$, since
$w\velocity(t) \in W^{1,\infty}_{\bm{n},0}(\O)^d$ we have, for a.e.\ $t\in(0,T)$, recalling the definition \eqref{eq:defhdisc} of $h_\disc$,
\[
 \left|\int_\O ( \gradD u^{(\theta)}_m(t)\cdot \velocity(t) w + \PiD u^{(\theta)}_m(t)\div(w\velocity(t)))\d\x\right| \le W_\disc(w\velocity(t)) \Vert u^{(\theta)}_m(t)\Vert_{\disc}\le h_\disc\Vert w\velocity\Vert_{W^{1,\infty}(\O\times(0,T))^d}\Vert u^{(\theta)}_m(t)\Vert_{\disc}.
\]
Thanks to \eqref{estimfldeux}--\eqref{estimhund0}, there is $\cter{estimldd}$ depending only on $\cter{estimld}$ and $T$ such that
\[
h_{\disc}\left(\int_0^T\Vert u^{(\theta)}_m(t)\Vert_{\disc}^p\right)^{1/p}\le  \ctel{estimldd}(h_{\disc}+h_{\disc}^{1-\frac{\alpha}{p}}).
\]
This right-hand side tends to $0$ as $m\to\infty$ (remember that $\alpha<p$) and thus, since $P_m\varphi$ is bounded in $L^\infty(0,T)$,
\[
 \lim_{m\to\infty} \int_0^T P_m \varphi(t)\int_\O  (\gradDt u_m(t)\cdot \velocity(t) w + \PiDt u_m(t)\div(w\velocity(t)))\d\x\d t =0.
\]
By strong convergence of $P_m \varphi$ to $\varphi$ in $L^2(0,T)$
and weak convergence of $\PiDt u_m$ to $\bar u$ in $L^2(\O\times(0,T))$ we infer
\[
 \lim_{m\to\infty} \int_0^T\int_\O P_m \varphi(t) \gradDt u_m(t)\cdot \velocity(t) w \d\x\d t= -\int_0^T\int_\O \varphi(t)\bar u\div(w\velocity(t))\d\x\d t.
\]
A Cauchy--Schwarz inequality yields
\begin{multline*}
  \left|\int_0^T P_m\varphi(t)\int_\O  (\gradDt u_m(t)\cdot \velocity(t) w -  \gradDt u_m(t)\cdot \velocity(t)\Pi_{\disc} P_m  w)\d\x\d t\right|\\
 \le \Vert \gradDt u_m\Vert_{L^p(\O\times(0,T))^d} 
  \Vert P_m\varphi(\velocity w - \velocity\Pi_{\disc} P_m  w)\Vert_{L^{p'}(\O\times(0,T))^d}
\end{multline*}
and, by definition of $P_m\varphi$, $h_\disc$ and $P_m w$,
\[
 \Vert P_m\varphi(t)(\velocity w - \velocity\Pi_{\disc} P_m  w)\Vert_{L^{p'}(\O\times(0,T))^d} \le  
T^{1/p}\norm{\varphi}{L^\infty(0,T)}\norm{\velocity}{L^\infty(\O\times(0,T))^d}h_\disc \norm{w}{W^{2,\infty}(\O)}.
\]
Therefore, using \eqref{estimhund0} again,
\begin{align}
\lim_{m\to\infty} \int_0^T P_m\varphi(t)\int_\O  \gradDt u_m(t)\cdot \velocity(t)\Pi_{\disc} P_mw   \d\x\d t={}& \lim_{m\to\infty} \int_0^T P_m\varphi(t)\int_\O  \gradDt u_m(t)\cdot \velocity(t) w \d\x\d t \nonumber\\
={}& -\int_0^T\varphi(t)\int_\O \bar u(t)\div(w\velocity(t))\d\x\d t.
\label{eq:weakbvlim}\end{align}

We take $\dt^{(n+\half)} \varphi(t^{(n)}) P_m w$ as test function in \eqref{eq:gradscheme} and sum the resulting equation over $n=0,\ldots,N-1$. This gives
\be\label{sum.terms}
\termr{1m}^{(m)} +\half \termr{2m}^{(m)}+\half\termr{div}^{(m)}-\half\termr{vm}^{(m)} + \termr{grad}^{(m)} = \termr{3m}^{(m)} 
\ee
with
\begin{align*}
\terml{1m}^{(m)} ={}&  \sum_{n=0}^{N-1} \dt^{(n+\half)}\varphi^{(n+1-\theta)}\int_\O \delta_\disc^{(n+\half)}  u_m \ \Pi_\disc P_m w\d\x,\\
\terml{2m}^{(m)} ={}& \sum_{n=0}^{N-1} \dt^{(n+\half)}\varphi^{(n+1-\theta)}\int_\O \grad_\disc u_m^{(n+\theta)} \cdot \velocity^{(n+\half)} \Pi_\disc P_m w\d\x,\\
\terml{div}^{(m)} ={}& \sum_{n=0}^{N-1} \dt^{(n+\half)}\varphi^{(n+1-\theta)} \int_\O \Pi_\disc u_m^{(n+\theta)} \left[( (q^I)^{(n+\half)}+  (q^I)^{(n+\half)}\right] \Pi_\disc P_m w\d\x,\\
\terml{vm}^{(m)} ={}& \sum_{n=0}^{N-1} \dt^{(n+\half)}\varphi^{(n+1-\theta)}\int_\O \Pi_\disc u_m^{(n+\theta)}\velocity_\disc(\x,t)\cdot\grad_\disc P_m w\d\x,\\
\terml{grad}^{(m)} ={}& h_\disc^\alpha\sum_{n=0}^{N-1} \dt^{(n+\half)}\varphi^{(n+1-\theta)}\int_\O |\gradD u_m^{(n+\theta)}|_\Lambda^{p-2} \Lambda^{(n+\half)}\gradD u_m^{(n+\theta)}\cdot\grad_\disc P_m w\d\x,\\
\terml{3m}^{(m)} ={}& \sum_{n=0}^{N-1}\varphi^{(n+1-\theta)} \dt^{(n+\half)} \int_\O f^{(n+\half)}     (q^I)^{(n+\half)}  \Pi_\disc  P_m w\d\x\d t.
\end{align*}
The summation-by-parts formula \cite[Eq. (D.17)]{gdmbook} reads
\[
 \sum_{n=0}^{N-1} (b^{(n+1)} - b^{(n)})(\theta a^{(n)} + (1-\theta)a^{(n+1)}) = - b^{(0)}a^{(0)} -  \sum_{n=0}^{N-1} (\theta b^{(n+1)} + (1-\theta)b^{(n)})(a^{(n+1)} - a^{(n)}) + b^{(N)}a^{(N)}.
\]
Using this relation to transform, in the sum appearing in $\termr{1m}^{(m)}$, the term $ \dt^{(n+\half)}\varphi^{(n+1-\theta)}\delta_\disc^{(n+\half)}  u_m$ into $(\varphi(t^{(n)})-\varphi(t^{(n+1)}))\Pi_\disc u_m^{(n+\theta)}$, we see that
$$
\termr{1m}^{(m)} = -\int_0^T\varphi'(t) \int_\O\PiDt u_m\ \Pi_\disc P_m w\d\x\d t - \varphi(0) \int_\O\Pi_\disc u_m^{(0)}\ \Pi_\disc P_m w\d\x,
$$
and so, since $\PiDt u_m\to \bar u$ weakly in $L^2(\O\times(0,T))$, $\PiD P_m w\to w$ strongly in $L^2(\O)$, and $\Pi_\disc u_m^{(0)}=\Pi_{\disc_m} \interp_{\disc_m} u_{\rm ini}\to u_{\rm ini}$ in $L^2(\Omega)$,
$$
\lim_{m\to\infty}\termr{1m}^{(m)} = -\int_0^T\varphi'(t) \int_\O u(\x,t) w\d\x\d t - \varphi(0) \int_\O u_{\rm ini}w\d\x.
$$
Noticing that
\[
\termr{2m}^{(m)} = \int_0^T P_m \varphi\int_\O \grad_\disc^\theta u_m \cdot \velocity \Pi_\disc P_m w\d\x,
\]
the relation \eqref{eq:weakbvlim} yields
$$
\lim_{m\to\infty}\termr{2m}^{(m)} = -\int_0^T\varphi(t)\int_\O \bar u\div(w\velocity)\d\x\d t =  -\int_0^T\varphi(t)\int_\O \bar u w \div(\velocity)\d\x\d t -\int_0^T\varphi(t)\int_\O \bar u \velocity\cdot\grad w\d\x\d t.
$$
Moreover, since 
\be\label{conv.divvel}
q^I_{\disc_m} - q^P_{\disc_m}\to q^I - q^P\mbox{ a.e.\ in $\Omega\times(0,T)$ as $m\to\infty$ and remains bounded},
\ee
$\PiDt u$ weakly converges
to $u$ in $L^2(\O\times(0,T))$, $\PiD (P_m w)$ strongly converges in $L^2(\O)$, and $\gradD (P_m w)$ strongly converges to $\nabla w$
in $L^2(\O)^d$, we have
\begin{align*}
\lim_{m\to\infty}\termr{div}^{(m)} ={}& \int_0^T\varphi(t)\int_\O \bar u w (q^I + q^P) \d\x\d t,\\
\lim_{m\to\infty}\termr{vm}^{(m)} ={}& \int_0^T\varphi(t)\int_\O \bar u \velocity\cdot \grad w\d\x\d t.
\end{align*}
The H\"older inequality and \eqref{estimhund0} show that
\begin{align*}
|\termr{grad}^{(m)}|\le{}& \overline{\lambda}^{p/2} T^{1/p}\norm{\varphi}{L^\infty(0,T)}h_\disc^\alpha \norm{\gradDt u}{L^p(\O\times(0,T))^d}^{p-1}\norm{\gradD (P_m w )}{L^p(\O)} \\
\le{}& \overline{\lambda}^{p/2} T^{1/p}\norm{\varphi}{L^\infty(0,T)}h_\disc^{\alpha/p} \cter{estimld}^{(p-1)/p}\norm{\gradD (P_m w )}{L^p(\O)}.
\end{align*}
The boundedness of $\gradD (P_m w )$ in $L^p(\O)$ (since this sequence converges in this space) and $\alpha>0$
 then yield $\lim_{m\to\infty}\termr{grad}^{(m)} = 0$.

Finally, using \eqref{conv.divvel} again,
$$
\lim_{m\to\infty}\termr{3m}^{(m)} = \int_0^T\varphi(t) \int_\O f(\x,t)   q^I(\x,t)  w(\x)\d\x\d t.
$$

Passing to the limit $m\to\infty$ in \eqref{sum.terms} shows that $\bar u$ satisfies \eqref{convweakeq} for any test function of the form $\varphi(t)w(\x)$, and thus for sums of such test functions.
Since the set $\mathcal T=\{\sum_{i=1}^q\varphi_i(t) w_i(\x)\,:\,
q\in\N,\varphi_i\in C^\infty_c[0,T),w_i\in C^\infty_c(\R^d)\}$ is dense in the set of the restrictions to $\overline{\O}\times [0,T)$ of the elements of $C^\infty_c(\R^d\times[0,T))$, we conclude that $\bar u$ is a solution of \eqref{convweakeq}.

\medskip

It now suffices to prove the uniform-in-time weak-$L^2(\O)$ convergence of $\Pi_{\disc_m}^\theta u_m$ to $\bar u$.
Let $w\in C^\infty_c(\R^d)$ and define $P_m w$ as before. For  $0\le s\le t$, writing $\Pi_{\disc_m}^\theta u_m(\x,t)- \Pi_{\disc_m}^\theta u_m(\x,s)$ as the sum of its jumps at each $t^{(n)}\in (s,t)$ (see \cite[Proof of Theorem 4.19]{gdmbook} for details), Scheme \eqref{eq:gradscheme} and the estimates in Lemma \ref{estimldlp} give the existence of $\ctel{cquatre}$, depending only on the data introduced in \ref{hypgen}, such that
\[
\left|\int_\O(\Pi_{\disc_m}^\theta u_m(\x,t)- \Pi_{\disc_m}^\theta u_m(\x,s))\Pi_{\disc_m} (P_mw)(\x)\d\x \right| \le
\cter{cquatre} (t-s + \dt_m)^{1/2} \Vert P_mw \Vert_{\disc_m}.
\]
Hence, introducing $\pm w$, and using \eqref{estimfldeux} again, 
\[
\left|\int_\O (\Pi_{\disc_m}^\theta u_m(\x,t)- \Pi_{\disc_m}^\theta u_m(\x,s))w(\x)\d\x\right| 
 \le
(t-s + 2\dt_m)^{1/2} \cter{cquatre} \Vert P_mw \Vert_{\disc_m} + 2 C_1\Vert w - \Pi_{\disc_m} (P_mw)\Vert_{L^2(\O)}.
 \]

Using $\sqrt{t-s+2\dt_m}\le\sqrt{t-s}+\sqrt{2\dt_m}$, we get
\[
\left|\int_\O(\Pi_{\disc_m}^\theta u_m(\x,t)- \Pi_{\disc_m}^\theta u_m(\x,s))w(\x)\d\x \right\vert\le g(t-s,h^\varphi_m),
\]
with $g(a,b) = \sqrt{a}\ctel{ccinq} + b$, $\cter{ccinq}  = \cter{cquatre}\sup_m \Vert P_mw \Vert_{\disc_m}$ and $h^\varphi_m = (2\dt_m)^{1/2} \cter{ccinq} + 2C_1 \Vert w - \Pi_{\disc_m} (P_mw)\Vert_{L^2(\O)}$. We then may apply \cite[Theorem C.11]{gdmbook} or \cite[Theorem 6.2]{DE15} to deduce that  $\Pi_{\disc_m}^\theta u_m$  weakly tends to $u$  in $L^2(\Omega)$ uniformly on $[0,T]$.
\end{proof}

\begin{proof}[Proof of Theorem \ref{cvgce}: strong convergence]
The proof makes use of the continuous energy estimate \eqref{convweakeqrenorm} and a discrete version thereof, in a similar way as in the proof of \cite[Theorem 2.16]{DE15}. Let us first establish this discrete energy estimate. We remark that for all $a,b,c,d\in\R$,  
\begin{align*}
(a-b)\left(\frac {a+b} 2 + \alpha \frac {a-b} 2\right)&\left(T - \frac {c+d} 2 - \alpha \frac {c-d} 2\right) \\
={}& \half(T-d)a^2 - \half(T-c)b^2 + \half (d-c)\left(\frac {a+b} 2 + \alpha \frac {a-b} 2\right)^2 \\
&+ \frac {(a-b)^2} 2 \left[\alpha\left(T - \frac {c+d} 2 - \alpha \frac {c-d} 2\right) + \frac {d-c} 4(1-\alpha^2)\right].
\end{align*}
Setting  $t^{(n+1-\theta)} = \theta t^{(n)} + (1-\theta)t^{(n+1)}$, letting $v = \dt^{(n+\half)} u^{(n+\theta)} (T - t^{(n+1-\theta)})$ in \eqref{eq:gradscheme}, applying the above
relation with $a=\Pi_\disc u^{(n+1)}$, $b=\Pi_\disc u^{(n)}$, $\alpha = 2\theta - 1$, $c = t^{(n)}$, $d = t^{(n+1)}$, and dropping the last addend (which is positive), we obtain
\begin{multline}
\dsp \int_\O 
\Bigg( 
  %terme temporel
   \half (\Pi_\disc u^{(n+1)})^2 (T - t^{(n+1)}) - \half (\Pi_\disc u^{(n)})^2 (T - t^{(n)}) + \half \dt^{(n+\half)}(\Pi_\disc u^{(n+\theta)})^2\\
   %terme diffusion
+ (T - t^{(n+1-\theta)})\half \dt^{(n+\half)}(\Pi_\disc u^{(n+\theta)})^2 \left[ (q^I)^{(n+\half)}+  (q^I)^{(n+\half)}\right]\\
+ (T - t^{(n+1-\theta)})\dt^{(n+\half)}h_\disc^\alpha\ |\gradD u^{(n+\theta)}|_\Lambda^p \Bigg)
\d\x 
\\ 
%terme source
\le (T - t^{(n+1-\theta)})\dt^{(n+\half)}
\int_\O f^{(n+\half)}     (q^I)^{(n+\half)}  \Pi_\disc u^{(n+\theta)}  \d\x.
\label{eq:estimmult}
\end{multline}
Summing the obtained inequality on $n=0,\ldots,N-1$, and denoting by $t_\disc$ the function equal to $t^{(n+1-\theta)}$ for all $t\in (t^{(n)},t^{(n+1)})$, we get
\begin{multline}
\dsp 
  %terme temporel
   \half \int_0^T\int_\O (\PiDt u(\x,t))^2\d\x\d t 
+ \half \int_0^T(T - t_\disc(t)) \int_\O (\PiDt u(\x,t))^2 \left[q^I_\disc(\x,t)+ q^P_\disc(\x,t)\right] \d\x\d t
\\ 
%terme source
\le \frac T 2\int_\O (\Pi_\disc\interp_\disc u_{\rm ini}(\x))^2\d\x + \int_0^T(T - t_\disc(t))\int_\O  f_\disc(\x,t)     q^I_\disc(\x,t)  \PiDt u(\x,t)  \d\x\d t.
\label{eq:estimmulttt}
\end{multline}
Taking the superior limit as $m\to\infty$ of the above inequality for $\disc = \disc_m$, we get
\begin{multline}
\dsp 
  %terme temporel
   \half \limsup_{m\to\infty}\int_0^T\int_\O (\Pi_{\disc_m}^\theta u_m(\x,t))^2\left(1+ (T - t_{\disc_m}(t))\left[q^I_{\disc_m}(\x,t)+q^P_{\disc_m}(\x,t)\right]\right)\d\x\d t 
\\ 
%terme source
\le \frac T 2\int_\O u_{\rm ini}(\x)^2\d\x + \int_0^T(T - t)\int_\O  f(\x,t)     q^I(\x,t)  \bar u(\x,t)  \d\x\d t.
\label{eq:estimmulttttt}
\end{multline}
We then use \eqref{convweakeqrenorm} to substitute the right-hand side of this inequality and find
\be
  %terme temporel
   \limsup_{m\to\infty}\int_0^T\int_\O (\Pi_{\disc_m}^\theta u_m)^2\left(1+ (T - t_{\disc_m})\left[q^I_{\disc_m}+q^P_{\disc_m}\right]\right)\d\x\d t 
\le  \int_0^T\int_\O \bar u^2\left(1+(T-t)\left[q^I+q^P\right]\right) \d\x\d t.
\label{eq:limsup}\ee
Developing the square $(\Pi_{\disc_m}^\theta u_m - \bar u)^2$ we have
\begin{align}
 \int_0^T\int_\O (\Pi_{\disc_m}^\theta u_m - \bar u)^2{}&\left(1+(T - t_{\disc_m})\left[q^I_{\disc_m}+q^P_{\disc_m}\right]\right) \d\x\d t\nonumber\\
 ={}& \int_0^T\int_\O (\Pi_{\disc_m}^\theta u_m)^2\left(1+ (T - t_{\disc_m})\left[q^I_{\disc_m}+q^P_{\disc_m}\right]\right)\d\x\d t \nonumber\\
 &-2 \int_0^T \int_\O \Pi_{\disc_m}^\theta u_m \ \bar u \left(1+(T - t_{\disc_m})\left[q^I_{\disc_m}+q^P_{\disc_m}\right]\right) \d\x\d t\nonumber\\
 &+ \int_0^T \int_\O \bar u^2 \left(1+(T - t_{\disc_m})\left[q^I_{\disc_m}+q^P_{\disc_m}\right]\right) \d\x\d t.
\label{cv:L2strong.0}
\end{align}
The limit of the second (resp. third) term in the right-hand side is obtained by weak/strong (resp. strong) convergence:
\[
 \lim_{m\to\infty}\int_0^T \int_\O \Pi_{\disc_m}^\theta u_m \ \bar u \left(1+(T - t_{\disc_m})\left[q^I_{\disc_m}+q^P_{\disc_m}\right]\right) \d\x\d t =  \int_0^T\int_\O \bar u^2  \left(1+(T-t)\left[q^I+q^P\right]\right) \d\x\d t,
\]
and
\[
 \lim_{m\to\infty}\int_0^T \int_\O \bar u^2 \left(1+(T - t_{\disc_m})\left[q^I_{\disc_m}+q^P_{\disc_m}\right]\right) \d\x\d  t =  \int_0^T\int_\O \bar u^2 \left(1+(T-t)\left[q^I+q^P\right]\right) \d\x\d t.
\]
Hence, using \eqref{eq:limsup} to deal with the first term in the right-hand side of \eqref{cv:L2strong.0} we find
\[
  \limsup_{m\to\infty}\int_0^T\int_\O (\Pi_{\disc_m}^\theta u_m - \bar u)^2 \left(1+(T - t_{\disc_m})\left[q^I_{\disc_m}+q^P_{\disc_m}\right]\right) \d\x\d t \le 0,
\]
and therefore, since $1+(T - t_{\disc_m})\left[q^I_{\disc_m}+q^P_{\disc_m}\right]\ge 1$,
\[
  \lim_{m\to\infty}\int_0^T\int_\O (\Pi_{\disc_m}^\theta u_m - \bar u)^2 \d\x\d t = 0,
\]
which concludes the proof of the convergence of $ \Pi_{\disc_m}^\theta u_m$ to $\bar u$ in $L^2(\O\times(0,T))$.

\end{proof}

\section{Numerical results}\label{sec:num}
{
Let $\Omega = (0,1)^2$ and consider meshes $\mathfrak{T}=(\mesh,\edges,\centers,\vertices)$ as per \cite[Definition 7.2]{gdmbook}: $\mesh$ is the set of polygonal/polyhedral cells $K$, $\edges$ is the set of faces $\sigma$, $\centers$ is a set of points $(\x_K)_{K\in\mesh}$ with $K$ star-shaped with respect to $\x_K$ for all $K\in\mesh$, and $\vertices$ is the set of vertices $\vertex$.
Let us define two test cases.

\medskip
{\bf Case 1.}

This test case is divergence free (it corresponds to the pure transport of a tracer).
We choose $T = 5$, $u_{\rm ini}(\x) = 1$ if $\x = (x_1,x_2)\in (0.1, 0.4)\times(0.1, 0.4)$ and $u_{\rm ini}(x)=0$ elsewhere, $q^I = q^P = 0$ and $\velocity$ is given by 
\[
 \velocity(x_1,x_2) = ( (1-2 x_2)(x_1 - x_1^2), -(1-2 x_1)(x_2 - x_2^2)).
\]

{\bf Case 2.}

This test case includes source terms.
We choose $T=1$,  $u_{\rm ini}\equiv 0$,  $\velocity$ is given by 
\[
 \velocity(x_1,x_2) = ( x_1 - x_1^2, x_2 - x_2^2),
\]
$q^I(\x,t) = \max(\div( \velocity)(\x),0) =  \max(2-2(x_1+x_2),0)$ and $f(\x,t) = 1$, $q^P(\x,t) = \max(-\div( \velocity)(\x),0) = \max(2(x_1+x_2)-2,0)$. Then the solution $\bar u(\x,t)$ to \eqref{pbfort} can be analytically calculated. Find first $\X(s) = (X_1(s),X_2(s))$ by solving the differential equation $\X'(s) = \velocity(\X(s))$, letting $\X(0) = \widehat{\x} = (\widehat{x}_1,\widehat{x}_2)$ for any $\widehat{\x}\in\O$ (this is easy owing to the expression of $\velocity$); set then $\bar v(s) = \bar u(\X(s),s)$, which leads to $\bar v'(s) = (f(\X(s)) - \bar v(s)) q^I(\X(s))$ with $\bar v(\X(0)) = 0$. This requires to compute $\widehat{s}$ such that $X_1(\widehat{s})+X_2(\widehat{s})=1$, since $q^I(\X(s)) = 0$ for $s\ge \widehat{s}$. Finally, we get that $\bar u(\x,t) = \bar v(t)$, when $\widehat{\x}$ is chosen such that $\X(t) = \x$, and we have $\bar u(\x,t) = 0$ if $\widehat{x}_1+\widehat{x}_2>1$. Denoting by $\alpha := \min(1,e^{\widehat{s}-t})$, this leads to the following expressions.
\[
\begin{aligned}
& \alpha = \min\Big(1, \sqrt{\frac {(1-x_1)(1-x_2)} {x_1x_2} }\Big),\\
&\bar u(\x,t) =  1 - \Big( \frac {e^t(1+ x_1(\alpha-1))(1+ x_2(\alpha-1))}{\alpha(e^t(1-x_1)+x_1)(e^t(1-x_2)+x_2)}\Big)^2\hbox{ if }\alpha e^t \ge 1,~ \bar u(\x,t) = 0 \hbox{ otherwise}.
\end{aligned}
\]
}

\subsection{Case 1, different schemes with $p=2$}\label{ssec:num:case1}
We apply Scheme \eqref{eq:gradscheme} with three different gradient discretisations, corresponding respectively to the mass-lumped conforming $\mathbb{P}_1$ finite element method (or CVFE method, see \cite{forsyth} for the seminal paper and \cite[Chapter 8]{gdmbook} for the study in the GDM framework), to the mass-lumped non-conforming $\mathbb{P}_1$ (MLNC--$\mathbb{P}_1$ for short) finite element method \cite[Chapter 9]{gdmbook}, and to (a variant of) the Hybrid Finite Volume method (HFV), a member of the family of Hybrid Mimetic Mixed methods \cite[chapter 13]{gdmbook}. 
For the sake of completeness we briefly recall the definition of these gradient discretisations.

\begin{figure}[htb] 
 \begin{center}
\begin{tabular}{cc}
 \input{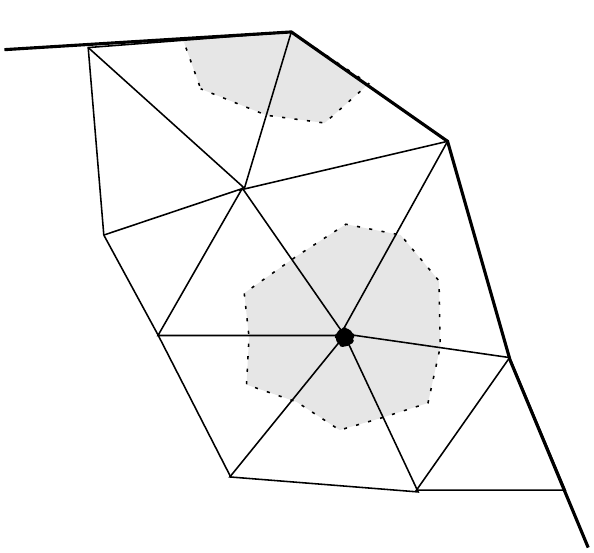_t} & \raisebox{3em}{\input{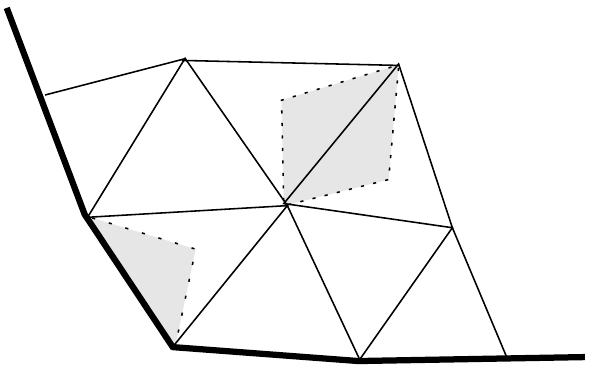_t}}
\end{tabular}
 \caption{Dual cells for the mass-lumped $\mathbb{P}_1$ gradient discretisations: conforming $\mathbb{P}_1$ (CVFE, left) and non-conforming $\mathbb{P}_1$ (right).\label{fig:ml}}
 \end{center}
\end{figure}
\begin{itemize}
\item \emph{CVFE method (mass-lumped conforming $\mathbb{P}_1$)}: the mesh $\mathfrak{T}$ is a conforming simplicial mesh \cite[Definition 7.4]{gdmbook}, and
\begin{itemize}[$*$]
\item $X_\disc=\{u=(u_\vertex)_{\vertex\in\vertices}\,:\,u_\vertex\in\R\mbox{ for all }\vertex\in\vertices\}$.
\item For each vertex $\vertex\in\vertices$, a dual cell $C_\vertex$ is constructed around the vertex by joining the cell centres of mass, the face centres of mass and (in 3D) the edge midpoints around $\vertex$ (see Figure \ref{fig:ml}, left).
Then, for $u\in X_\disc$ and any $\vertex\in\vertices$, $(\Pi_\disc u)_{|C_\vertex}=u_\vertex$.
\item For $u\in X_\disc$, $\nabla_\disc u$ is the gradient of the $\mathbb{P}_1$ function constructed from the vertex values $(u_\vertex)_{\vertex\in\vertices}$.
\end{itemize}
{The proof that this GD leads to a consistent and limit-conforming sequence of space-time gradient discretisation in the sense of Definition \ref{def:conslimconf} can be obtained by following that of  \cite[Theorem 8.17]{gdmbook}.}

\item \emph{MLNC--$\mathbb{P}_1$}: the mesh $\mathfrak{T}$ is also a conforming simplicial mesh, and
\begin{itemize}[$*$]
\item $X_\disc=\{u=(u_\edge)_{\edge\in\edges}\,:\,u_\edge\in\R\mbox{ for all }\edge\in\edges\}$.
\item For each face $\edge\in\edges$, a dual cell $C_\edge$ is constructed as the union, for each cell on each side of $\edge$, of the convex hulls on the face and the cell centre of mass (see Figure \ref{fig:ml}, right).
Then, for $u\in X_\disc$ and any $\edge\in\edges$, $(\Pi_\disc u)_{|C_\edge}=u_\edge$.
\item For $u\in X_\disc$, $\nabla_\disc u$ is the gradient of the non-conforming $\mathbb{P}_1$ function constructed from the edge values $(u_\edge)_{\edge\in\edges}$.
\end{itemize}
{The proof that this GD leads to a consistent and limit-conforming sequence of space-time gradient discretisation in the sense of Definition \ref{def:conslimconf} can be obtained by following that of  \cite[Theorem 9.17]{gdmbook}.}

\item \emph{(Variant of the) HFV method}: $\mathfrak{T}$ is a generic polygonal/polyhedral mesh and
\begin{itemize}[$*$]
\item $X_\disc=\{u=((u_K)_{K\in\mesh},(u_\sigma)_{\sigma\in\edges}\,:\,u_K\in\R\mbox{ for all $K\in\mesh$},\ u_\edge\in\R\mbox{ for all $\edge\in\edges$}\}$.
\item A coefficient  $\gamma\in (0,1]$ is chosen and each cell $K$ is partitioned into $\hat{K}$ and $(K_\edge)_{\edge\in\edges_K}$, where $\edges_K$ is the set of faces of $K$, $|\hat{K}|=\gamma|K|$ and $|K_\edge|=\frac{1-\gamma}{{\rm Card}(\edges_K)}|K|$ (here, $|E|$ denotes the Lebesgue measure of the set $E$).
\item For all $u\in X_\disc$ and all $K\in\mesh$, $(\Pi_\disc u)_{|\hat{K}}=u_K$ and, for all $\edge\in\edges_K$, $(\Pi_\disc u)_{|K_\edge}=u_\edge$.
\item For all $u\in X_\disc$, all $K\in\mesh$ and all $\sigma\in\edgescv$ (where $\edgescv$ is the set of faces of $K$),
\[
(\nabla_\disc u)_{|D_{K,\edge}}=\overline{\nabla}_K u + \beta_K\frac{\sqrt{d}}{d_{K,\edge}}\left[u_\edge-u_K-\overline{\nabla}_K u\cdot (\overline{\x}_\edge-\x_K)\right]\bfn_{K,\edge},
\]
where $\beta_K>0$ is a user-defined parameter and
\begin{itemize}[$\diamond$]
\item $\bfn_{K,\edge}$ and $\overline{\x}_\edge$ are respectively the outer normal to $K$ on $\edge$ and the centre of mass of $\edge$,
\item $\overline{\nabla}_K u=\frac{1}{|K|}\sum_{\edge\in\edgescv} |\edge|u_\edge\bfn_{K,\edge}$, with $|K|$ and $|\edge|$ the $d$- and $(d-1)$-measure of $K$ and $\edge$, respectively,
\item $d_{K,\edge}$ the orthogonal distance between $\x_K$ and $\edge$.
\end{itemize}
\end{itemize}
{The proof that this GD leads to a consistent and limit-conforming sequence of space-time gradient discretisation in the sense of Definition \ref{def:conslimconf} can be obtained by following that of \cite[Theorem 13.16]{gdmbook}.}

\end{itemize}

\begin{remark}[Original HFV method]\label{rem:sushi}
The original HFV scheme (also known as SUSHI scheme) consists in choosing $\gamma=1$ \cite[chapter 13]{gdmbook}, that is, $(\Pi_\disc u)_{|K}=u_K$ for all $K\in\mesh$ (the face unknowns are not involved in the definition of $\Pi_\disc$). 
We however found that, when applied to the gradient scheme \eqref{eq:gradscheme} for the linear hyperbolic equation, the HFV method requires quite a lot of fiddling with various parameters (diffusion magnitude and direction, the coefficients $\beta_K$, etc.) to produce acceptable results. 
%The reason could be that, for the HFV method, the main unknowns are the face unknowns (cell unknowns can be eliminated by static condensation). 
% However, ndrh
 Indeed, for $\gamma=1$, the face unknowns are not involved in the accumulation term in \eqref{eq:gradscheme}, so that these unknowns are not accurately updated at each time step -- the diffusion is the quantity that links the face and cell unknowns, and with a vanishing diffusion, this link looses too much strength. 
Involving the face unknowns in the definition of $\Pi_\disc$, by re-distributing the fraction $1-\gamma$ of the complete volume to these unknowns in the accumulation term, ensures a much better stability and behaviour of the method.
However, for $\gamma=0$, {\it  i.e.} when  the total volume $|K|$ is re-distributed so that only the face unknowns are accounted for in $\PiD$, the solution displays severe oscillations around the discontinuities of the initial condition. 
The coefficient $\gamma$ should therefore be chosen in $(0,1)$.
\end{remark}
\begin{remark}[Choice of {$\hat{K}$} and {$K_\edge$}]
In practice, implementing the HFV method does not require to choose a detailed geometry for $\hat{K}$ and $K_\edge$, as source and advection integral terms are approximated using only the values of the function at the centres of mass of $K$ and $\sigma$  and the measures of $\hat{K}$ and $K_\sigma$.
For example,
\[
\int_\Omega f \Pi_\disc v \d\x \approx \sum_{K\in\mesh} \left(|\hat{K}|f(\overline{\x}_K)v_K +\sum_{\edge\in\edges_K}|K_\edge|f(\overline{\x}_\edge)v_\edge\right),
\]
where, for $E=K$ or $E=\edge$, $\overline{\x}_E$ is the centre of mass of $E$.
\end{remark}

We also compare the results, obtained with these GDs, with the results using the upstream weighting scheme based on the standard CVFE method \cite[Section 4.3]{chen2006comp} on a triangular mesh (upstream values are computed with respect to the sign of fluxes computed at the boundaries of the dual mesh). 
All the considered meshes are from \cite{2Dbench}. 
For the CVFE, MLNC--$\mathbb{P}_1$ and upstream schemes we use the family of meshes \texttt{mesh1\_X}. 
For the HFV method we fixed $\gamma=0.3$, $\beta_K=1$ for all $K\in\mesh$ and we ran the simulations on the locally refined and non-conforming family of meshes \texttt{mesh3\_X}. 
A sensitivity analysis on the parameter $\gamma$ was carried out. 
Tests were performed for $\gamma$ ranging from 0 to 1.
As mentioned in Remark \ref{rem:sushi} for $\gamma=0$, severe oscillations occur because the cell unknowns are no longer present in the accumulation term.
The numerical results obtained for $\gamma \in (0,1)$ do not vary much, although taking $\gamma\in (0,1)$ instead of $\gamma=1$ seems to reduce the numerical diffusion and produces a scheme which is more stable with respect to changes in the parameter $\Lambda$.
Examples of the considered mesh families are shown in Figure \ref{fig:meshes}.
We let $\theta =\half$, $p=2$ and $\alpha = 2$ for the discretisation scheme (note that we only proved that the scheme converges for $\alpha\in(0,2)$). 
The analytical solution is approximated by the characteristics method, where the characteristics ODE is approximated using the explicit Euler scheme with time step $0.001$.

The errors are calculated at the final time, by projecting the analytical solution onto the appropriate piecewise-constant functions (depending on the considered method). Thus, for $q=1$ or $q=2$, we set
\[
\begin{aligned}
\ba\mbox{CVFE and}\\\mbox{upstream $\mathbb{P}_1$}\ea:&\quad \texttt{errlq}=\left(\sum_{\vertex\in\vertices} |C_\vertex|\,|u^N_\vertex-\bar u(\vertex,T)|^q\right)^{1/q},\\
\mbox{MLNC--$\mathbb{P}_1$}:&\quad \texttt{errlq}=\left(\sum_{\edge\in\edges} |C_\edge|\,|u^N_\edge-\bar u(\overline{\x}_\edge,T)|^q\right)^{1/q},\\
\mbox{HFV}:&\quad \texttt{errlq}=\left(\sum_{K\in\mesh} \gamma|K|\,|u^N_K-\bar u(\overline{\x}_K,T)|^p
+\sum_{K\in\mesh}\sum_{\edge\in\edges_K}\frac{1-\gamma}{{\rm Card}(\edges_K)}|K| \,|u^N_\edge-\bar u(\overline{\x}_\edge,T)|^q\right)^{1/q}.
\end{aligned}
\]

\begin{figure} 
 \begin{center}
\begin{tabular}{cc}
 \includegraphics[width=0.9\linewidth]{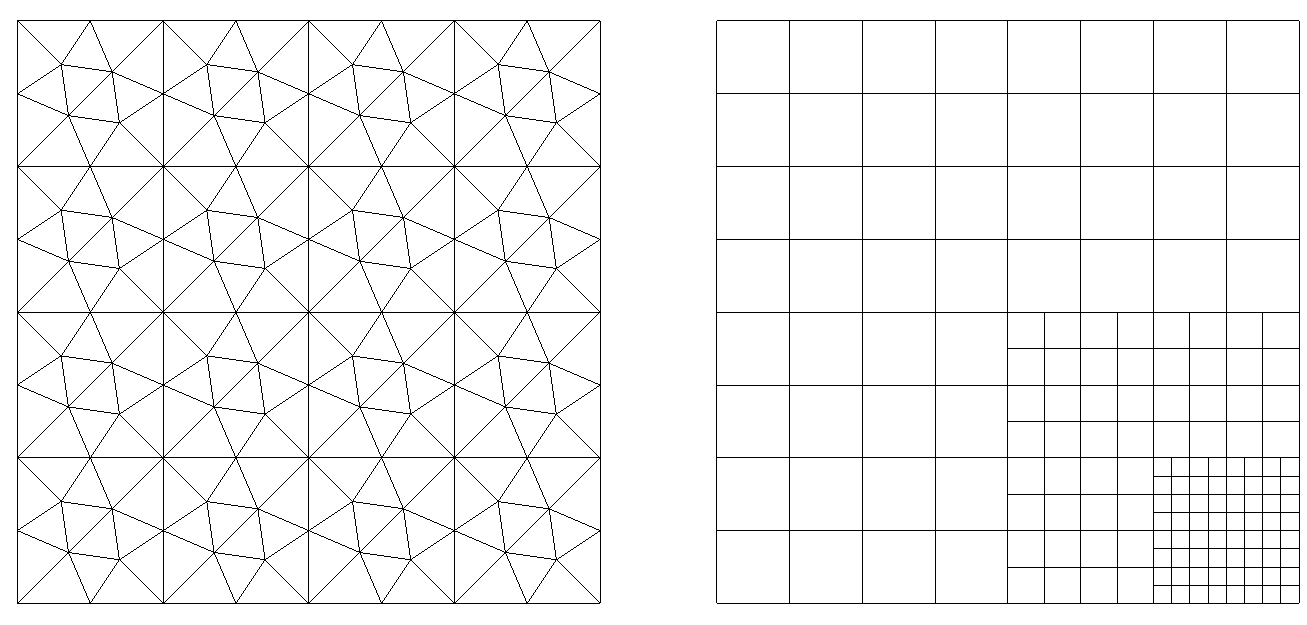}
\end{tabular}
 \caption{Meshes used for the simulations: \texttt{mesh1\_2} (left) and \texttt{mesh3\_2} (right).\label{fig:meshes}}
 \end{center}
\end{figure}

\begin{table}[ht!]
\begin{center}
\begin{tabular}{|c|c|c|c|c|c|c|c|}
\hline 
$h$ & \texttt{errl2} & rate & \texttt{errl1} & rate & umin& umax\\
\hline
 0.250 &  2.95E-01 &    -   &  1.95E-01 &    -   &    0.108 &  0.137 \\
\hline 
 0.125 &  2.55E-01 &  0.212 &  1.37E-01 &  0.504 &    0.014 &  0.174 \\
\hline 
 0.062 &  2.32E-01 &  0.136 &  1.23E-01 &  0.158 &    0.000 &  0.344 \\
\hline 
 0.031 &  1.77E-01 &  0.394 &  8.55E-02 &  0.525 &   -0.001 &  0.734 \\
\hline 
 0.016 &  1.23E-01 &  0.524 &  4.73E-02 &  0.853 &   -0.013 &  1.003 \\
\hline
\end{tabular}
\end{center}
\caption{\label{tab:cvfe-cent} {Case 1}, results with the centred scheme, using the CVFE method, $\dt = 0.4 h$ }
\end{table}

\begin{table}[ht!]
\begin{center}
\begin{tabular}{|c|c|c|c|c|c|c|c|}
\hline 
$h$ & \texttt{errl2} & rate & \texttt{errl1} & rate & umin& umax\\
\hline
 0.250 & 2.52E-1  &    -   &  1.10E-1  &    -   &   0.043   &  0.054  \\
\hline
 0.125 & 2.65E-1  & -0.076  &  1.51E-1  &  -0.457  &  0.016    &  0.194  \\
\hline
 0.062 & 2.37E-1 & 0.165  & 1.31E-1  & 0.208  &   0.000   & 0.361  \\
\hline
 0.031 & 1.82E-1  & 0.381  & 8.64E-2   & 0.597 &  0.000   &  0.687 \\
\hline
 0.016 & 1.33E-1  & 0.456   & 5.34E-2   & 0.694   & 0.000   & 0.960   \\
\hline
\end{tabular}
\end{center}
\caption{\label{tab:mnncP1-cent} {Case 1}, results with the centred scheme, using the MLNC--$\mathbb{P}_1$ method, $\dt = 0.4 h$ }
\end{table}

\begin{table}[ht!]
\begin{center}
\begin{tabular}{|c|c|c|c|c|c|c|c|}
\hline 
h & \texttt{errl2} & rate & \texttt{errl1} & rate & umin& umax\\
\hline
0.35 & 2.80E-1  &    -   &  2.08E-1  &    -   &  0.152    & 0.155   \\
\hline
0.18 & 2.79E-1  & 0.001  &  1.54E-1  & 0.436   & 0.044     &  0.124  \\
\hline
0.09 & 2.59E-1 & 0.111  & 1.30E-1  & 0.236  &  0.001    &  0.220 \\
\hline
0.04 & 2.10E-1  & 0.300  & 1.08E-1   & 0.276 &  0.000   & 0.499  \\
\hline
0.02 & 1.47E-1  &  0.520  & 6.57E-2   & 0.713   &  0.000  & 0.906   \\
\hline
\end{tabular}
\end{center}
\caption{\label{tab:HMM-cent} {Case 1}, results with the centred scheme, using the HFV method, $\dt = 0.4 h$ }
\end{table}

\begin{table}[ht!]
\begin{center}
\begin{tabular}{|c|c|c|c|c|c|c|c|}
\hline 
$h$ & \texttt{errl2} & rate & \texttt{errl1} & rate & umin& umax\\
\hline
 0.250 &  2.59E-01 &    -   &  1.65E-01 &    -   &    0.005 &  0.313 \\
\hline
 0.125 &  2.32E-01 &  0.159 &  1.19E-01 &  0.462 &    0.000 &  0.286 \\
\hline
 0.062 &  2.13E-01 &  0.122 &  1.10E-01 &  0.122 &    0.000 &  0.454 \\
\hline
 0.031 &  1.85E-01 &  0.205 &  9.13E-02 &  0.266 &    0.000 &  0.672 \\
\hline
 0.016 &  1.53E-01 &  0.270 &  6.93E-02 &  0.398 &    0.000 &  0.868 \\
\hline
\end{tabular}
\end{center}
\caption{\label{tab:cvfe-ups} {Case 1}, results with the upstream $\mathbb{P}_1$ scheme, $\dt = 0.4 h$ }
\end{table}

We observe that all the convergence rates are lower than one half (due to the discontinuity of the exact solution, better orders cannot be expected). The GDM based methods seem to produce such an order when refining the meshes. {Note that these convergence orders are much smaller than that observed on Test Case 2 (see Section \ref{sec:resnumcase2}), which can be expected since the analytical solution is discontinuous here, whereas it is continuous in Test Case 2.}

\newcommand{\inc}[1]{\includegraphics[viewport = 600 150 1100 650,clip = true]{#1}}

\begin{figure}[ht!]
\begin{center}

\setlength\unitlength{1cm}

\resizebox{\textwidth}{!}{\inc{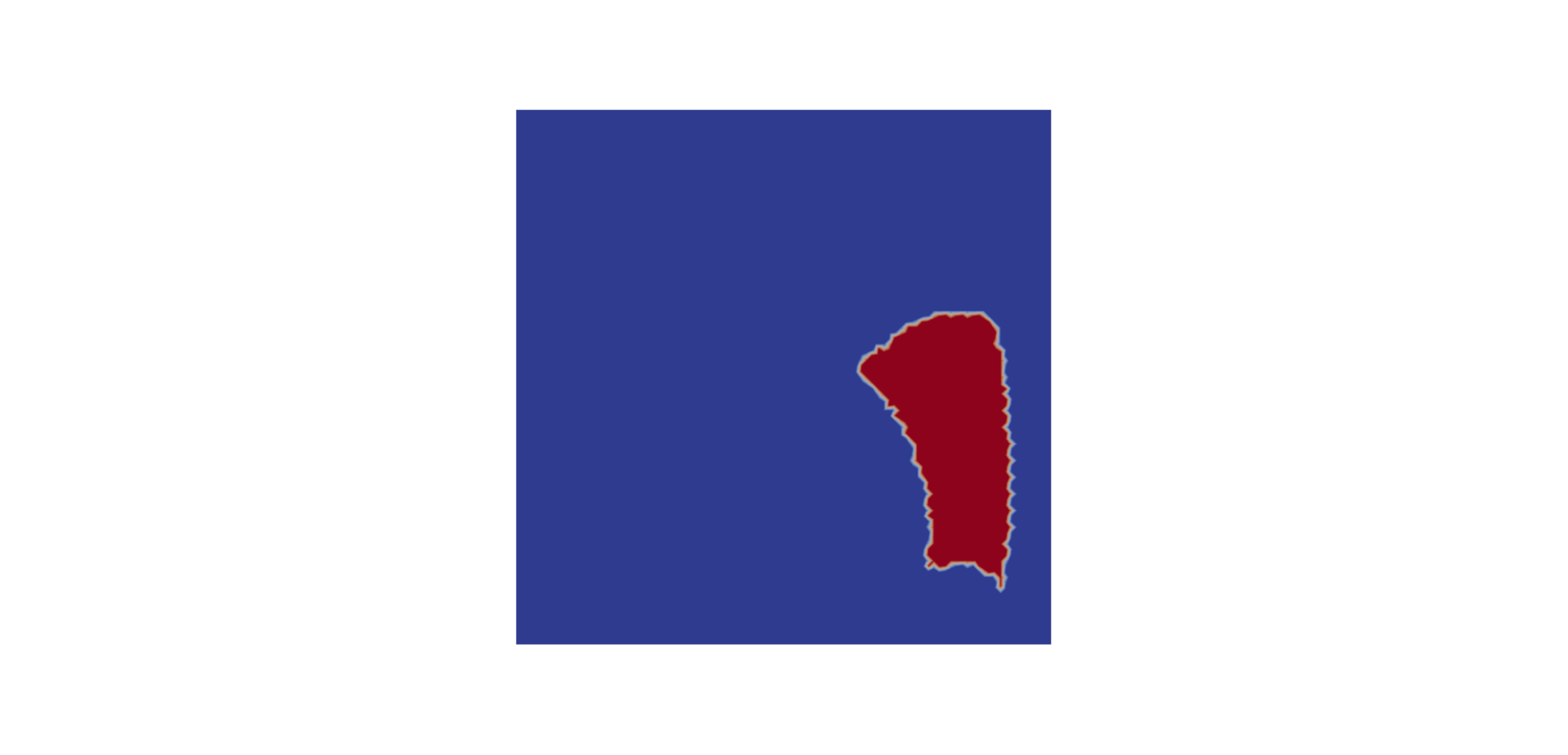}\inc{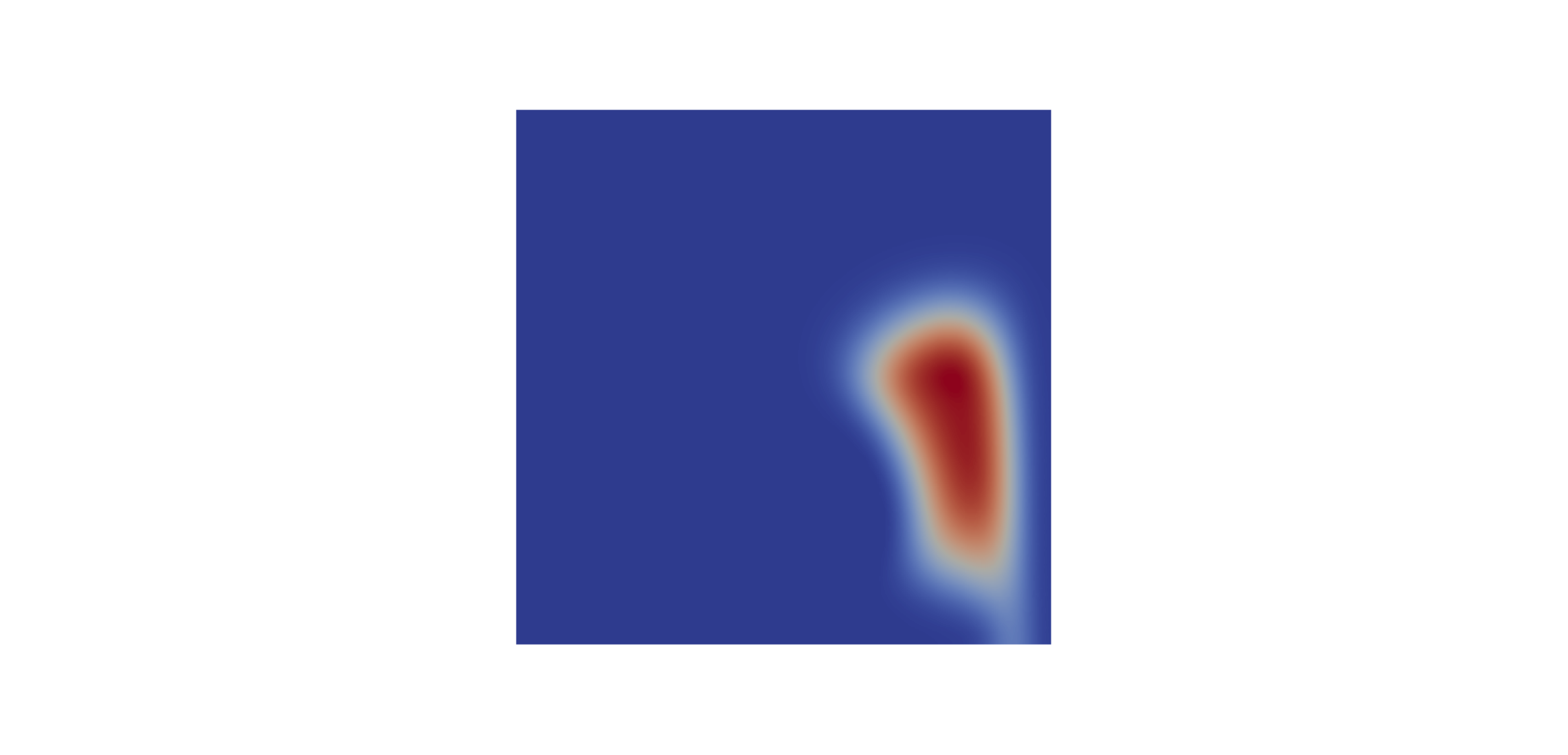}\inc{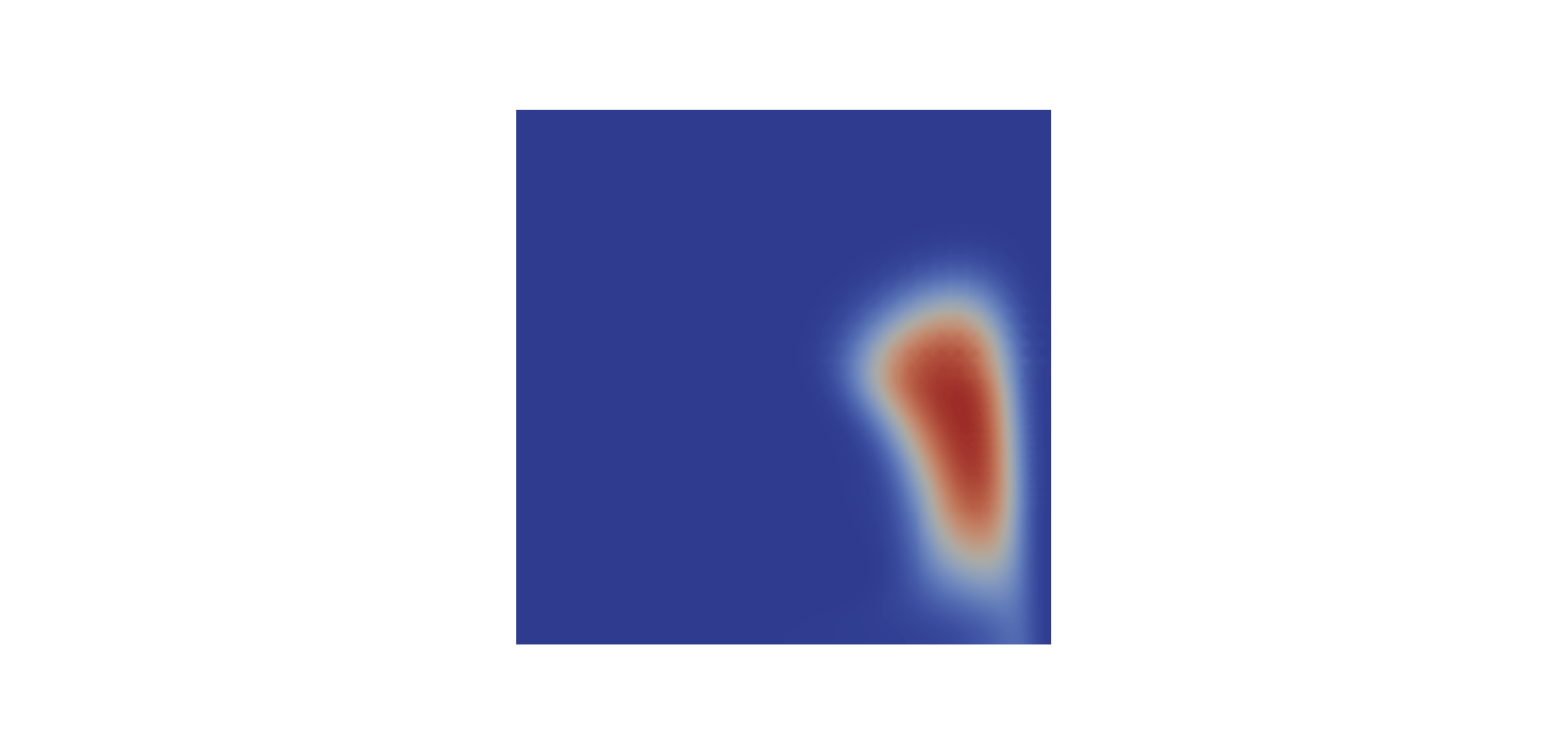}\inc{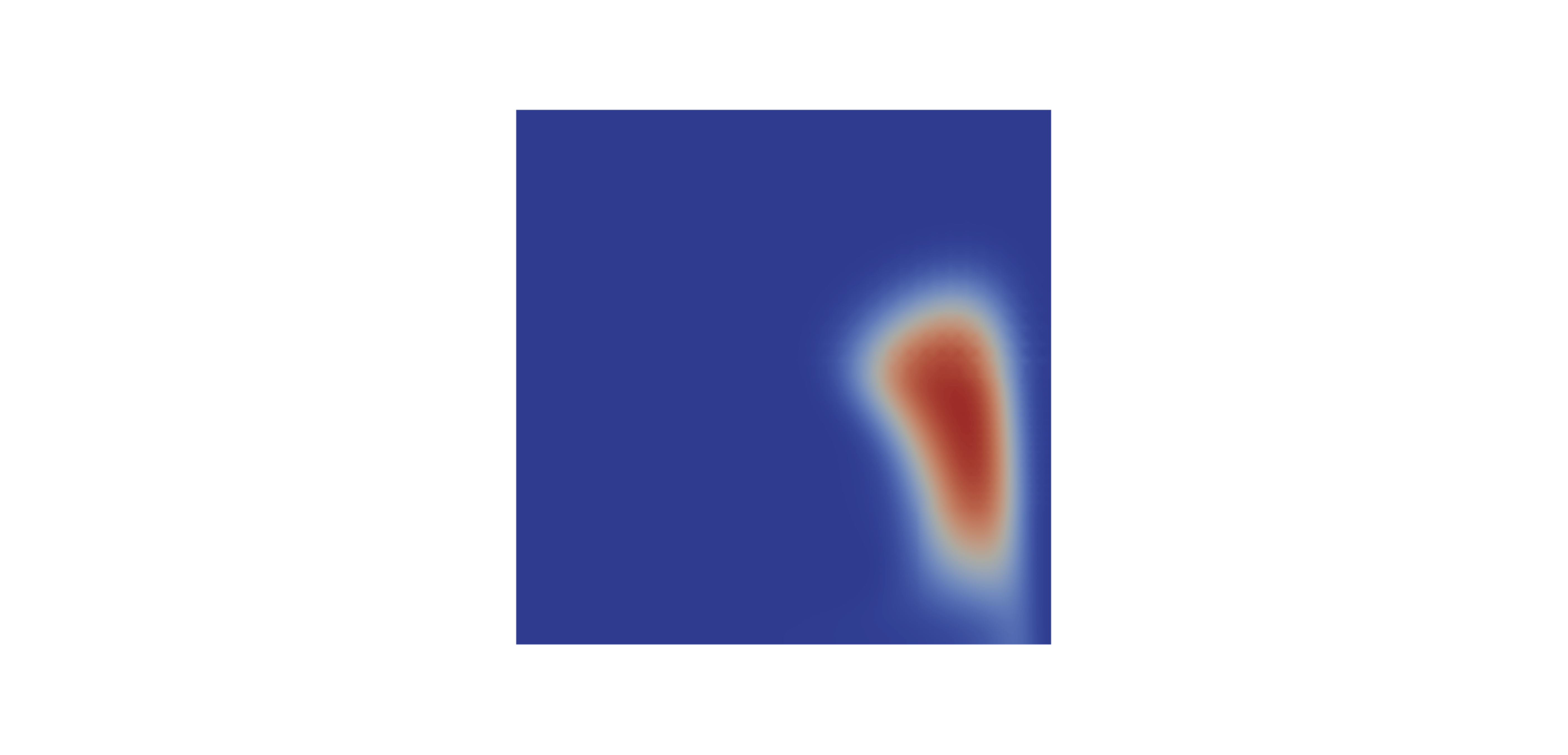}\inc{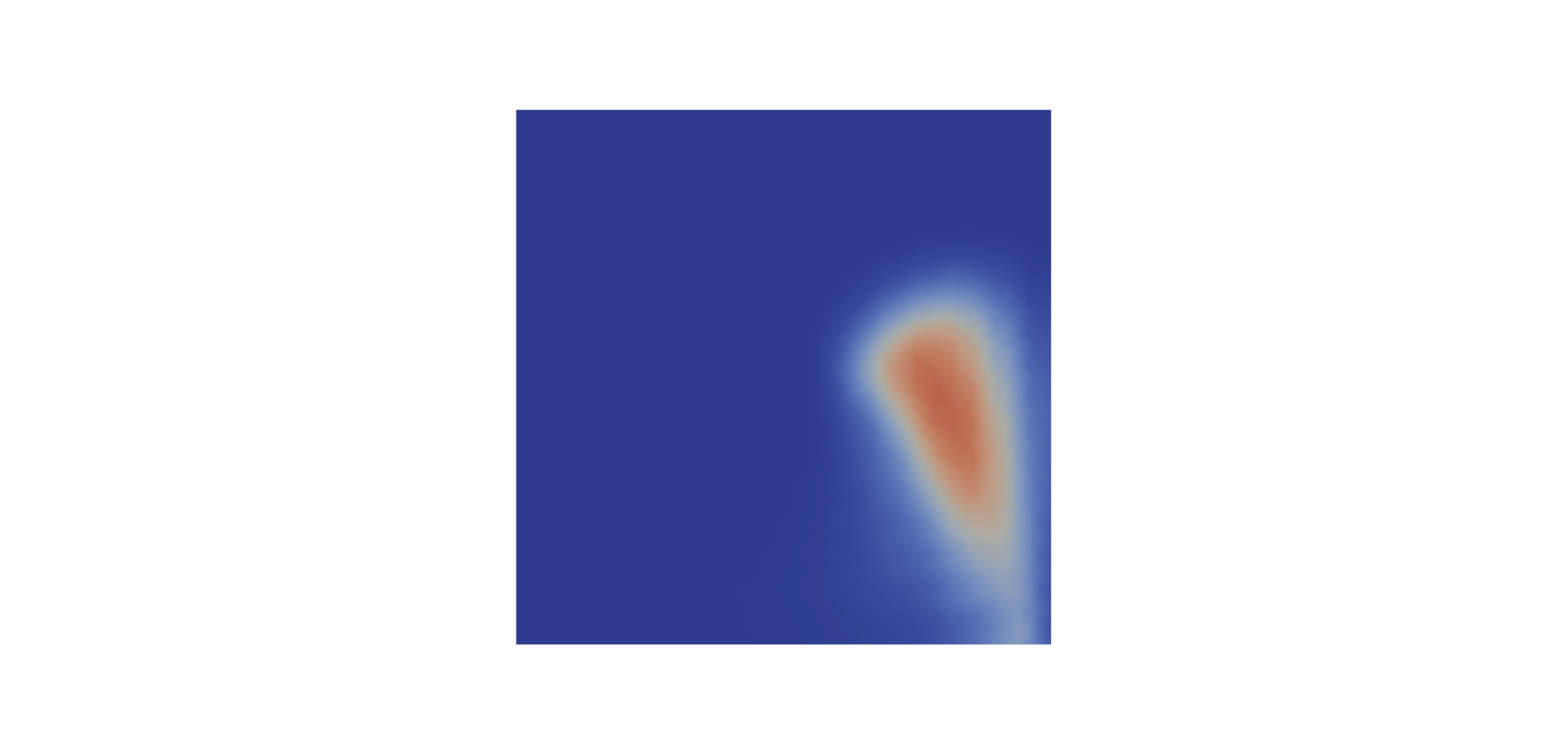}}
\begin{picture}(5,7)
\put(0.1,0.1){\resizebox{4.5cm}{!}{\includegraphics{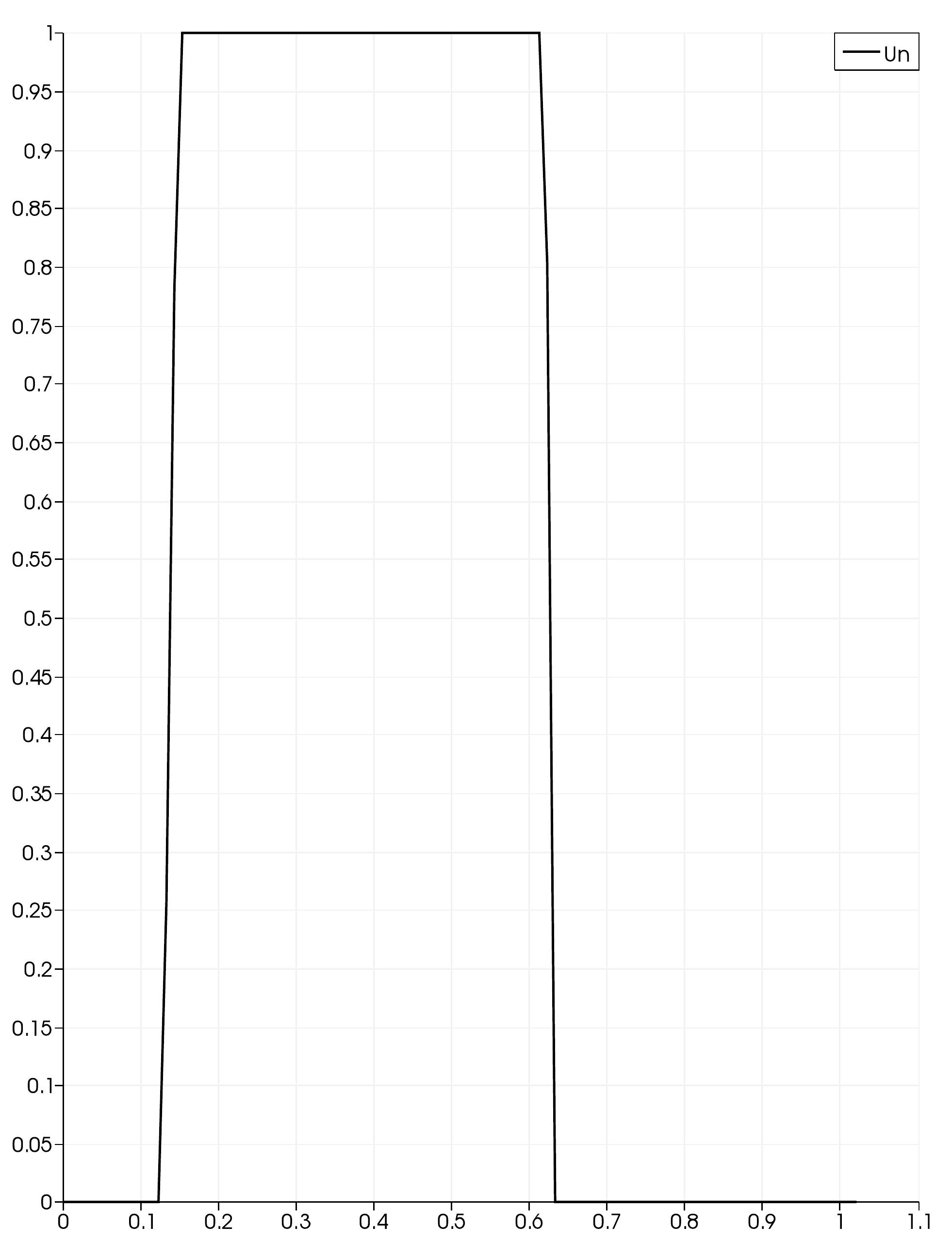}}}
\put(4.7,0){$s$}
\put(0,6){$\bar u(\x(s),T)$}
\end{picture}
\begin{picture}(5,7)
\put(0.1,0.1){\resizebox{4.5cm}{!}{\includegraphics{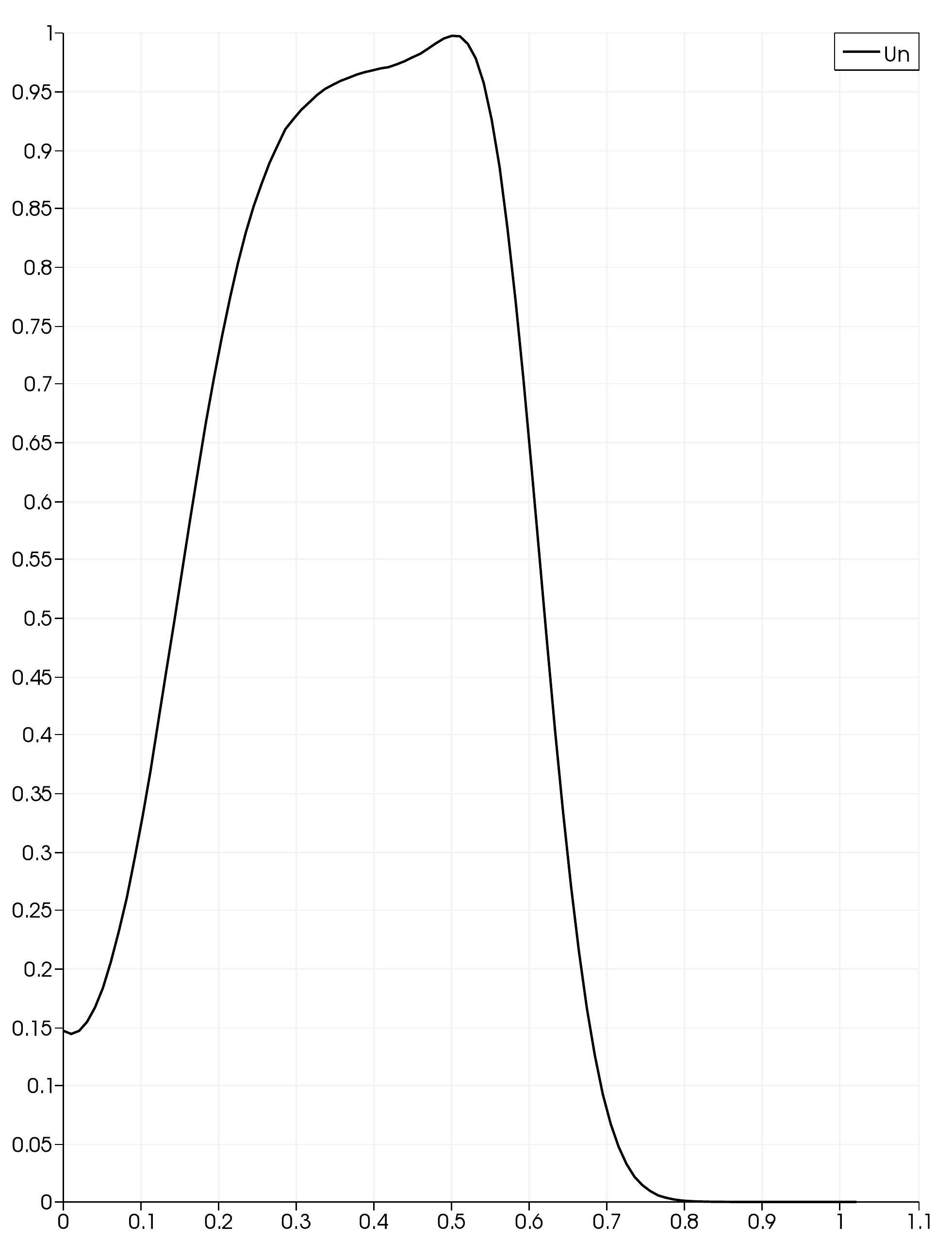}}}
\put(4.7,0){$s$}
\put(0,6){$\Pi_\disc u^{(N)}(\x(s))$}
\end{picture}
\begin{picture}(5,7)
\put(0.1,0.1){\resizebox{4.5cm}{!}{\includegraphics{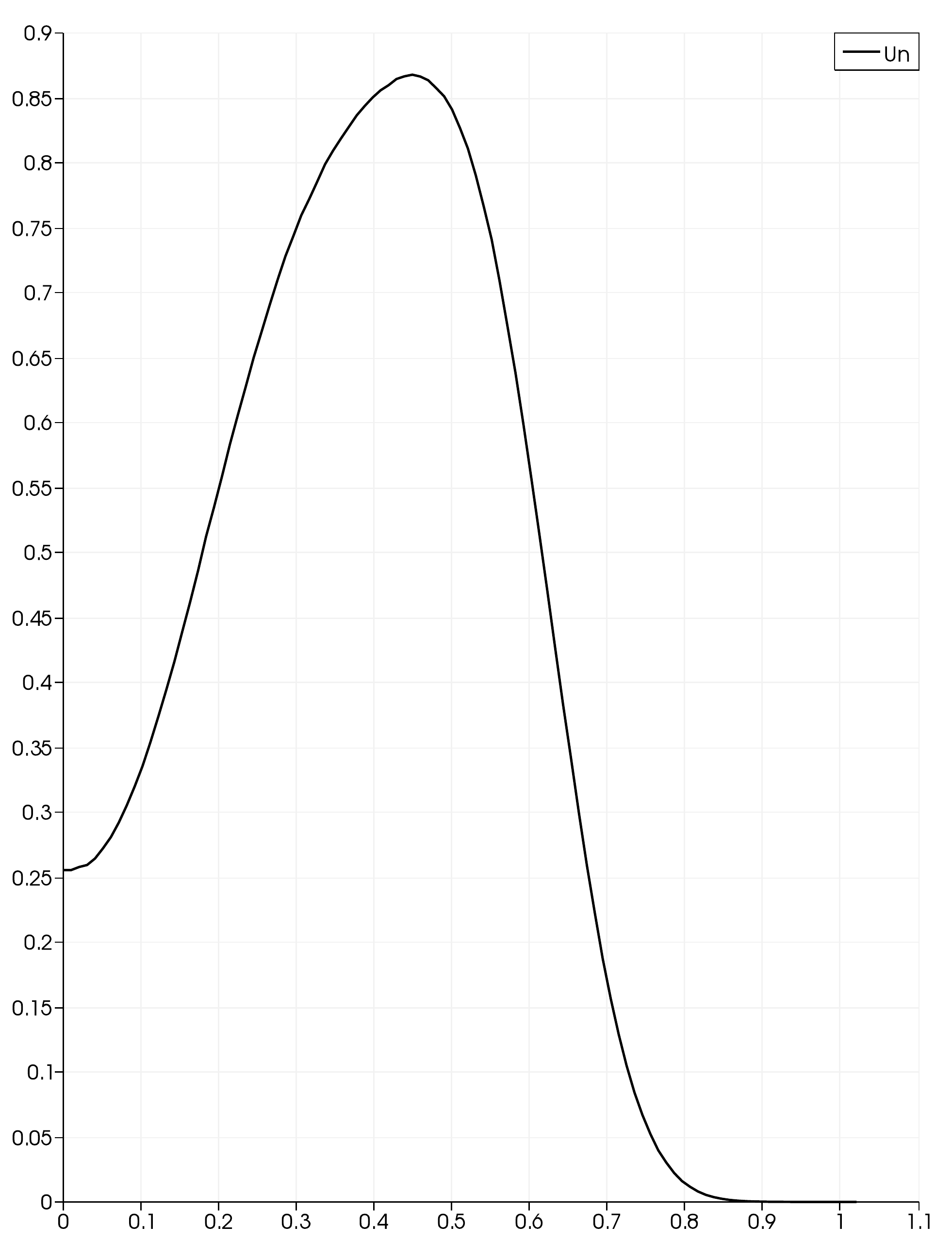}}}
\put(4.7,0){$s$}
\put(0,6){$\Pi_\disc u^{(N)}(\x(s))$}
\end{picture}
\end{center}
\caption{\label{fig:profiles_2d} Top: values of $u$ (darkblue 0, darkred 1). Left to right: analytical solution at final time $T=5$, centred CVFE, upwind CVFE, HFV, MLNC--$\mathbb{P}_1$ at last step $N$. 
Bottom: Profiles at the same times and time step along the segment $[\x(0),\x(\sqrt{1.04})]$  with curvilinear abscissae $s$ such that $\x(0) = (0.9,0)$ and $\x(\sqrt{1.04}) = (0.7,1)$, with $\theta = 0.5$. Left: analytical solution; centre: centred scheme with $\alpha = 2$; right: upstream $\mathbb{P}_1$ scheme.}
\end{figure}

\subsection{Case 1, different values of $p$ with CVFE scheme}

In Figure \ref{fig:profiles_p}, we compare on the triangular mesh \texttt{mesh1\_4} the results obtained on the same problem as the previous section, but using only the CVFE scheme, and letting $p$ vary. 
The numerical scheme is solved quite accurately at each time step using Newton's method (the $p$-Laplace operator being particularly easy to compute using the $\mathbb{P}^1$ finite element). The homogeneity degree of the coefficient for the diffusion term, with respect to the units of length and $\bar u$, is a function of $p$. Because of that, properly comparing the results for various $p$ is difficult at best.

The considered mesh \texttt{mesh1\_4} is too coarse for the scheme to have already converged. However, there is something to be learnt on the results on this mesh since computing numerical solutions on a too coarse mesh is a standard situation in industrial contexts. We observe that, on this mesh, the profiles obtained with $p<2$ differ quite a bit from those obtained with $p\ge 2$, the latter being closer to the expected solution. This seems to indicate that, in practical applications, choosing a higher value of $p$ provides better results.

\begin{figure}[ht!]
\begin{center}
\setlength\unitlength{1cm}
\begin{picture}(15,9)
\put(0.1,0.1){\resizebox{15cm}{!}{\includegraphics{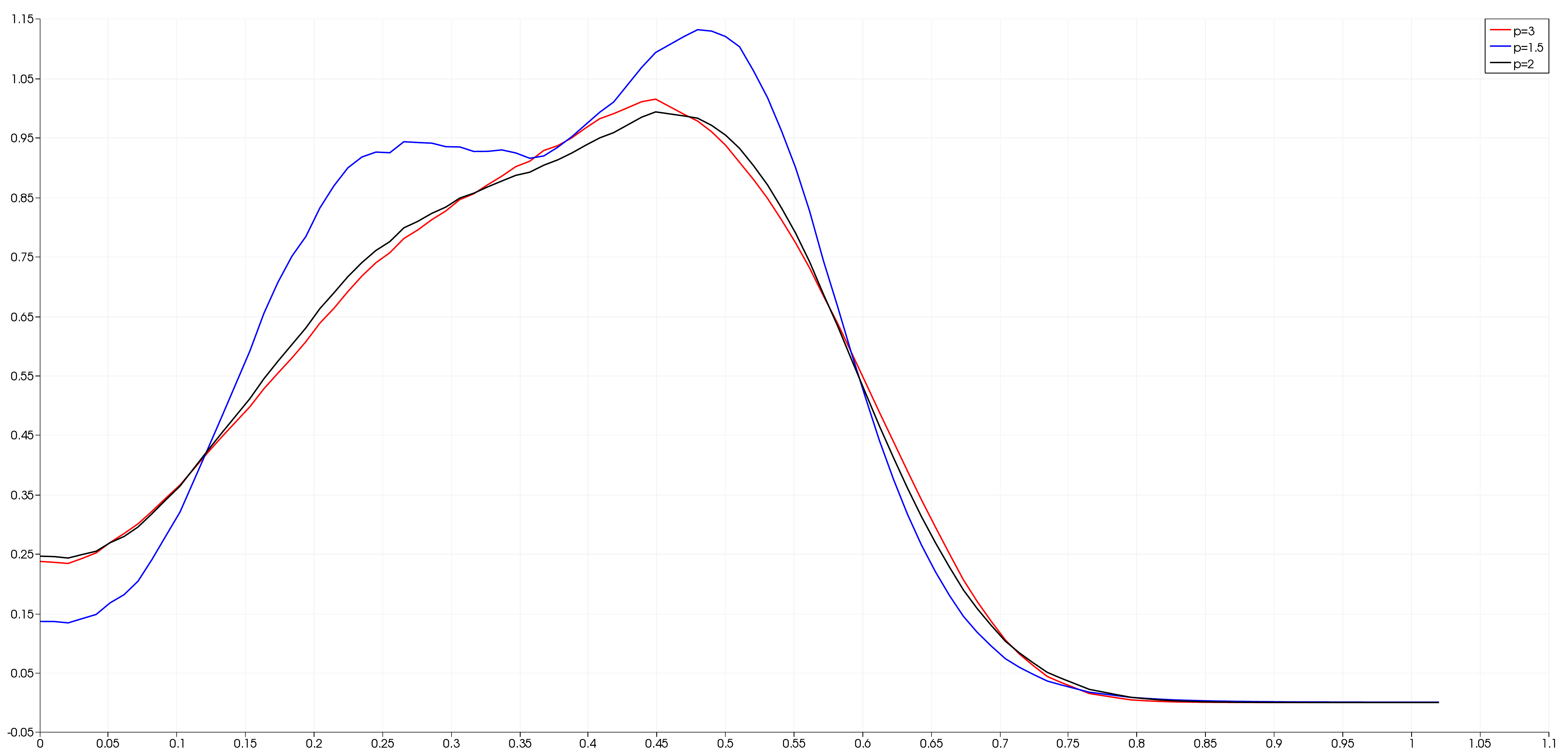}}}
\put(15.2,0){$s$}
\put(0,7.5){$\Pi_\disc u^{(N)}(\x(s))$}
\end{picture}
\end{center}
\caption{\label{fig:profiles_p} Comparison of the solutions $\Pi_\disc u^{(N)}(\x(s))$ at time step $N$ for different values of $p$ ($p=3$ for the red curve, $p=1,5$ for the blue curve and $p=2$ for the black curve), along the same segment as in Figure \ref{fig:profiles_2d}.}
\end{figure}

\subsection{Case 2, comparison of CVFE schemes with analytical solution}\label{sec:resnumcase2}
{
We apply Scheme \eqref{eq:gradscheme} with the CVFE method to Case 2, with $p=2$ and $\theta = 0.5$. In this case, the solution $\bar u$ is regular (it belongs to $C^1(\O\times[0,T])$), and therefore the convergence orders are much higher that those observed in Test Case 1; this is shown in Table \ref{tab:cvfe-cent-div}, which provides the convergence orders including for the $L^\infty$-norm at the final time. The convergence orders with Scheme  \eqref{eq:gradscheme} are also higher that the ones observed in Table \ref{tab:cvfe-ups-div} for the upstream scheme (these orders for the $L^1$ error are close to 1, as expected in this regular case).
This accurate convergence is confirmed by Figure \ref{fig:soldiv}, where we plot the profiles of the approximate solutions and of the exact solution at final time along the first diagonal. }

\begin{figure}[ht!]
\begin{center}
\resizebox{4cm}{!}{\includegraphics[trim={4cm 3cm 4cm 2cm},clip]{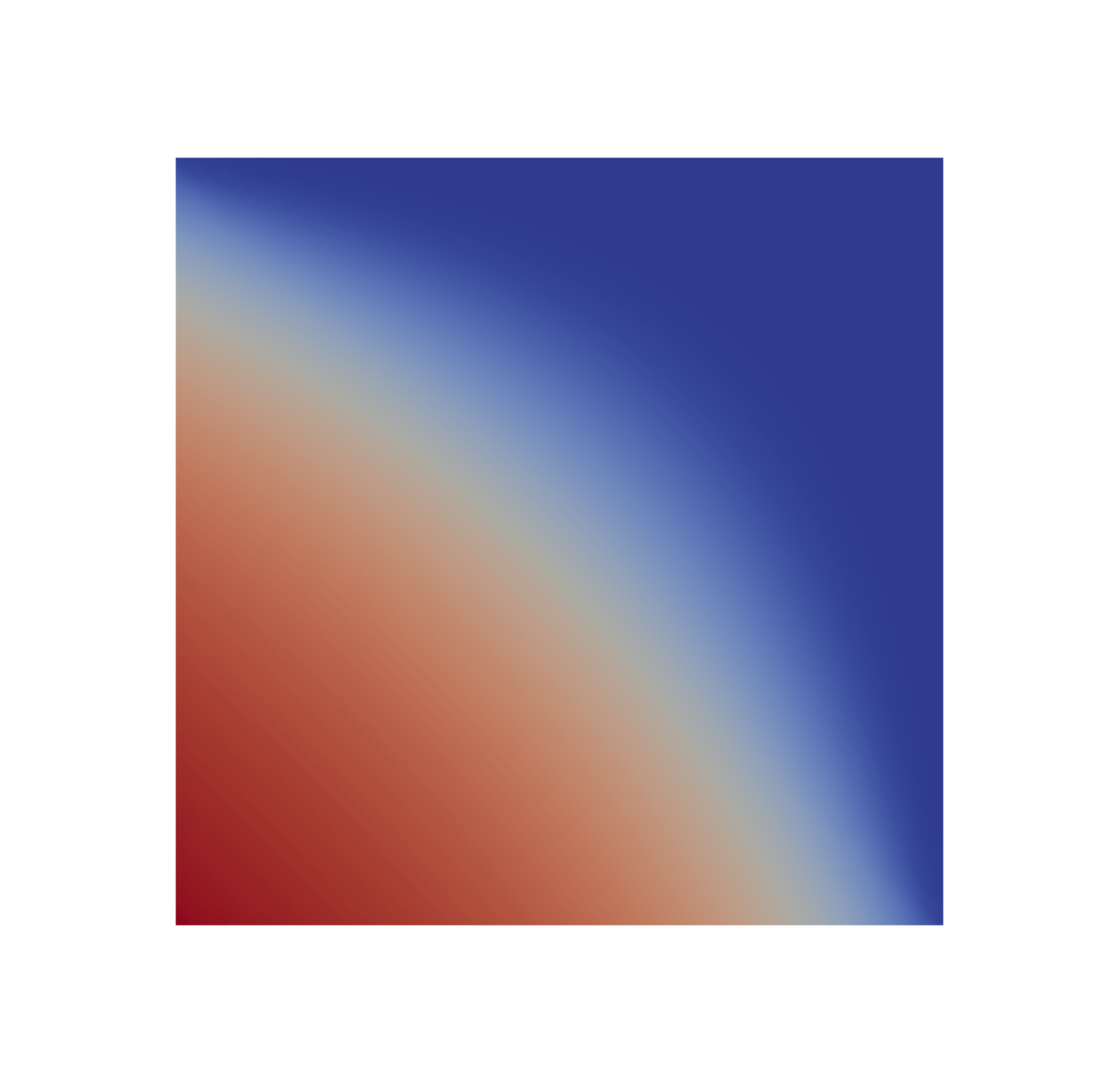}}
\setlength\unitlength{1cm}
\begin{picture}(5,5)
\put(0.1,-1.8){\resizebox{6cm}{!}{\includegraphics{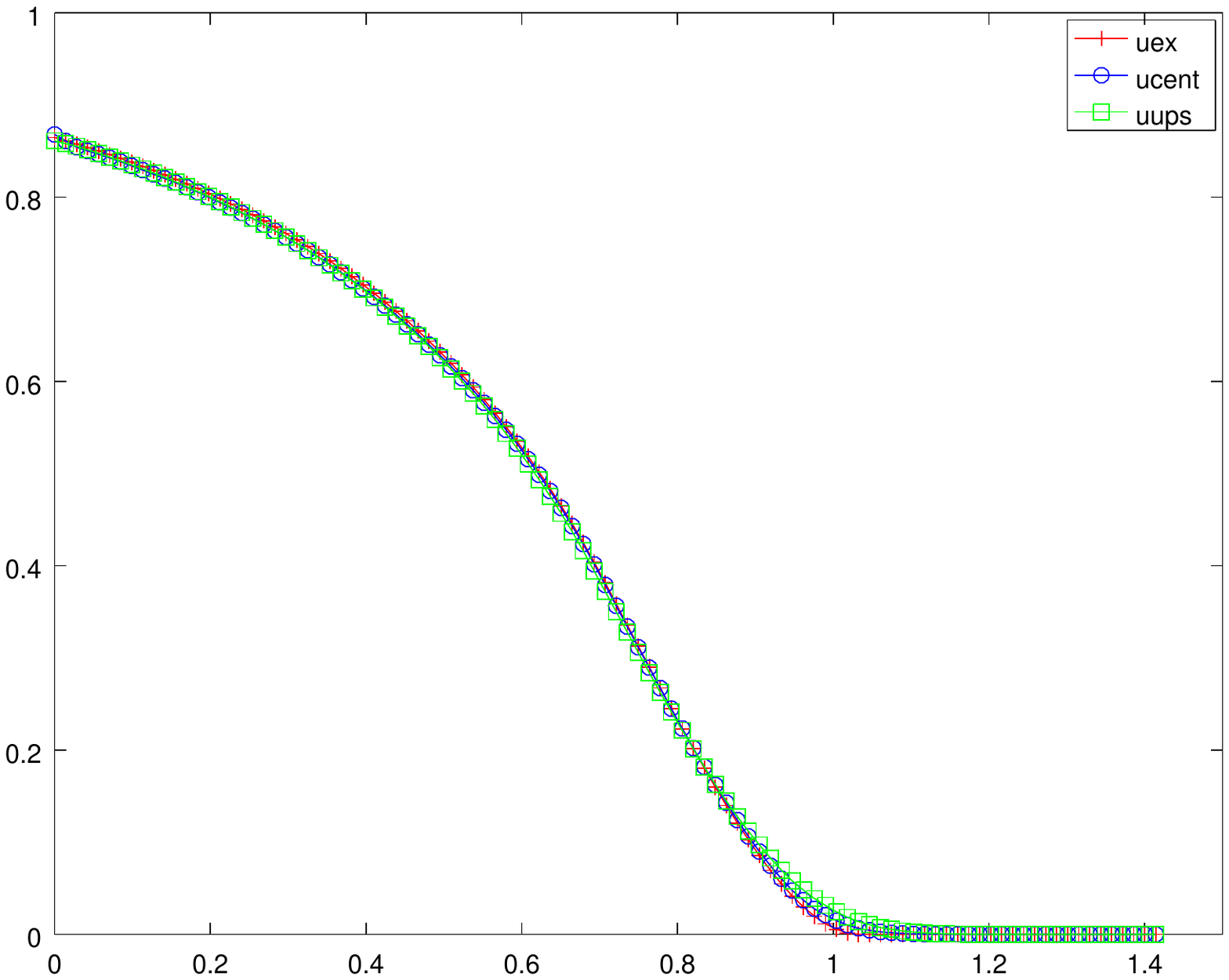}}}
\put(5.5,0){$s$}
\put(0.8,4.5){\begin{minipage}{5em}\small $\bar u(\x(s),T)$\\ and $\Pi_\disc u^{(N)}(\x(s))$\end{minipage}}
\end{picture}
\setlength\unitlength{1cm}
\begin{picture}(5,4)
\put(0.1,-1.8){\resizebox{6cm}{!}{\includegraphics{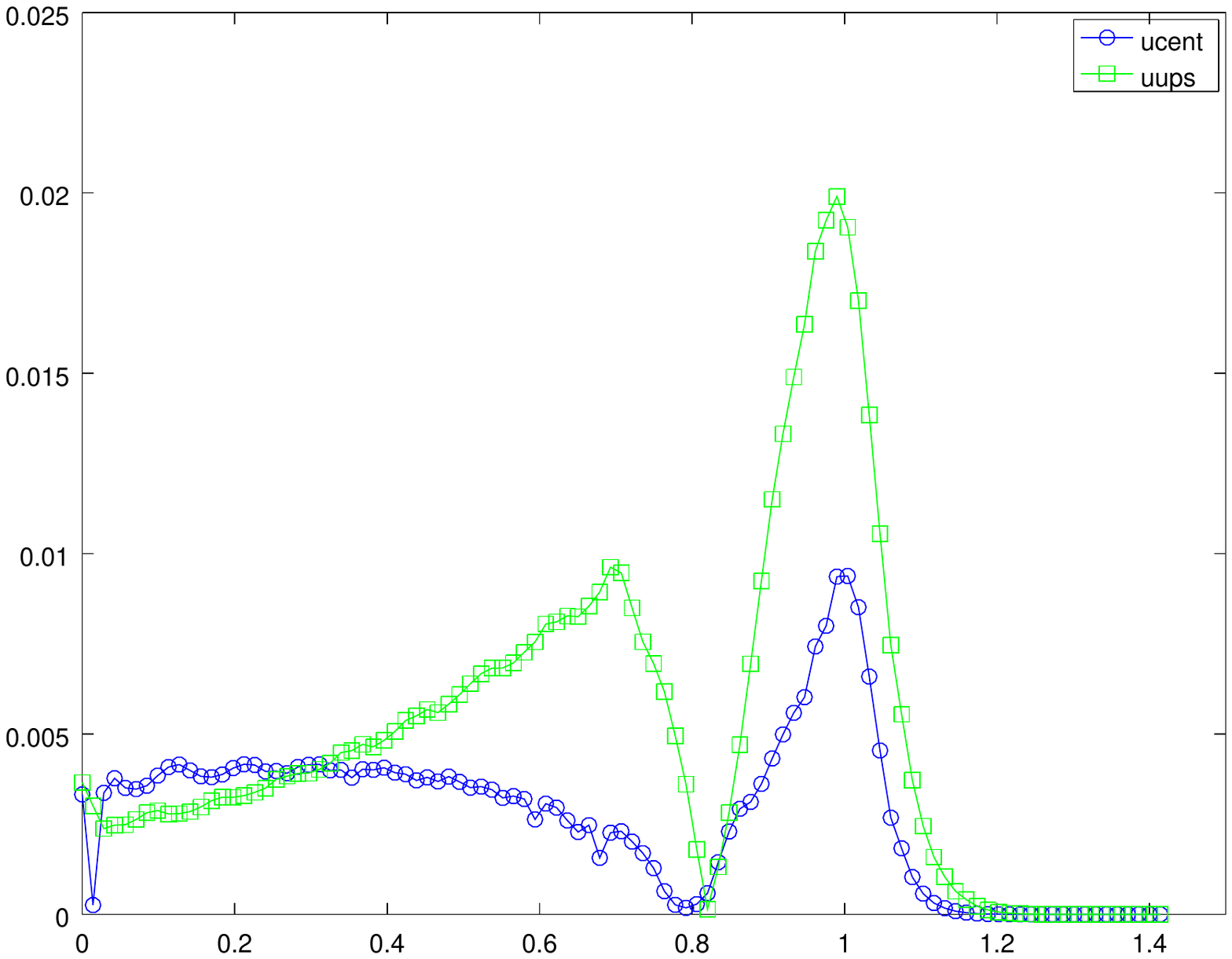}}}
\put(5.5,0){$s$}
\put(0.8,4.3){\begin{minipage}{15em}\small $|\Pi_\disc u^{(N)}(\x(s))-\bar u(\x(s),T)|$\end{minipage}}
\end{picture}
%\resizebox{\textwidth}{!}{\includegraphics{profil_p.pdf}}
\end{center}
\caption{\label{fig:soldiv} {Case 2. Left: approximate solution at the last time step with  Scheme \eqref{eq:gradscheme}. Middle: $\Pi_\disc u^{(N)}(\x(s))$ and exact solution $\bar u(\x(s),T)$, along the first diagonal (segment $[\x(0),\x(\sqrt{2})]$  with curvilinear abscissae $s$ such that $\x(0) = (0,0)$ and $\x(\sqrt{2}) = (1,1)$), with $\theta = 0.5$. Red line and plus: exact solution; blue line and circles: approximate solution with Scheme \eqref{eq:gradscheme}; green line and squares: approximate solution with upstream scheme.  Right: $|\Pi_\disc u^{(N)}(\x(s))-\bar u(\x(s),T)|$, blue line and circles: approximate solution with Scheme \eqref{eq:gradscheme}; green line and squares: approximate solution with upstream scheme. Computations done using \texttt{mesh1\_4} and $\dt = 0.0125$.}}
\end{figure}

\begin{table}[ht!]
\begin{center}
\begin{tabular}{|c|c|c|c|c|c|c|c|}
\hline 
$h$ & \texttt{errl2} & rate & \texttt{errl1} & rate &\texttt{errl$\infty$} & rate \\
\hline
 0.250 &  4.96E-02 &    -   &  4.34E-02 &    -   &    0.138 &   - \\
\hline 
 0.125 &  1.82E-02 &  1.44 &  1.42E-02 &  1.61 &    7.17E-02 &  0.94 \\
\hline 
 0.062 &  5.89E-03 &  1.62 &  4.26E-03 &  1.73 &    3.59E-02 &  0.99 \\
\hline 
 0.031 &  1.81E-03 &  1.70 &  1.16E-03 &  1.87 &    1.78E-02 &  1.01 \\
\hline 
 0.016 &  5.51E-04 &  1.71 &  3.06E-04 &  1.92 &    8.86E-03 &  1.01 \\
\hline
\end{tabular}
\end{center}
\caption{\label{tab:cvfe-cent-div} {Case 2, results with Scheme \eqref{eq:gradscheme}, using the CVFE method, $\dt = 0.4 h$ }}
\end{table}

\begin{table}[ht!]
\begin{center}
\begin{tabular}{|c|c|c|c|c|c|c|c|}
\hline 
$h$ & \texttt{errl2} & rate & \texttt{errl1} & rate &\texttt{errl$\infty$} & rate \\
\hline
 0.250 &  5.37E-02 & - &  5.01E-02   & - &   9.25E-02  & -  \\
\hline 
 0.125 &   2.88E-02 & 0.90 & 2.59E-02  & 0.95 &  5.85E-02 & 0.66\\
\hline 
 0.062 &    1.57E-02 & 0.87 &  1.35E-02 & 0.94 &   3.36E-02 & 0.80\\
\hline 
 0.031 & 8.55E-03 & 0.88 &  6.94E-03 & 0.96 &  2.21E-02 & 0.60 \\
\hline 
 0.016 &   4.57E-03 & 0.90 &  3.53E-03 & 0.98 &  1.50E-02 & 0.55 \\
\hline
\end{tabular}
\end{center}
\caption{\label{tab:cvfe-ups-div} {Case 2, results with the upstream scheme, using the CVFE method, $\dt = 0.4 h$ }}
\end{table}

\section{Conclusion}

We designed a numerical scheme, based on the Gradient Discretisation Method, for linear advection equations. 
The approximation is built on a skew-symmetric formulation of the advective terms, which enables estimates and a complete proof of convergence without additional regularity on the solution.  
The abstract notion of the size of a GD is used in both the design of the scheme and in the characterisation of the properties of the GDM. 
We note that this size of GD is defined purely using the underlying abstract spaces and operators; although linked to the mesh size for mesh-based schemes, it can also be fully defined for meshless methods.

The analysis carried out in this paper may also lead to the development and analysis of novel GDM-based schemes for coupled hyperbolic-parabolic problems.
% such as the scheme for the incompressible Navier-Stokes problem already studied in \cite{EF15}.  

\bibliographystyle{abbrv}
\bibliography{conv_gdm}

\end{document}